\newcommand{\IZ}{\mathbb{Z}}                       
\newcommand{\IN}{\mathbb{N}}  
\newcommand{\IR}{\mathbb{R}}   
\newtheorem{stat}{Statement}
\newtheorem{prop}[stat]{Proposition}
\newtheorem{cor}[stat]{Corollary}
\newtheorem{lemma}[stat]{Lemma}
\theoremstyle{remark}
\renewcommand{\epsilon}{\varepsilon}
\renewcommand{\phi}{\varphi}
\begin{document}

\title[A construction of the Stable Web]{A construction of the Stable Web}

\author{Thomas Mountford}
\address{Thomas Mountford:
Ecole Polytechnique F\'ed\'erale de Lausanne,
Institut de Math\'ematiques,
Station 8, 1015 Lausanne, Switzerland}
\email{thomas.mountford@epfl.ch}

\author{Krishnamurthi Ravishankar}
\thanks{KR was supported by Simons Collaboration Grant 281207}
\address{Krishnamurthi Ravishankar:
NYU -ECNU Institute of Mathematical Sciences at  NYU
Shanghai, 3663 Zhongshan Road North, Shanghai, 200062 China}

\author{Glauco Valle}
\address{Glauco Valle:
Instituto de Matem\'atica,
Universidade Federal do Rio de Janeiro}

\begin{abstract}
We provide a process on the space of coalescing cadlag stable paths and show convergence in the appropriate topology 
for coalescing stable random walks on the integer lattice.

\bigskip
\noindent \textsc{MSC 2010:} 82B44, 82D30, 60K37

\medskip
\noindent \textsc{Keywords:} Stable processes, weak convergence, Hausdorff topology
\end{abstract}

\maketitle
\section{\Large\bf Introduction}
A system of coalescing Brownian motions starting at ``every"  point in $\mathbb{R}$ and evolving independently before coalescence was first introduced by Arratia in \cite{A1,A2}. This system have being studied by several authors and motivated the question about the existence of a system of coalescing Brownian Motion starting at ``every"  point in the space-time plane $\mathbb{R}^2$. Such an object is called the Brownian Web and was introduced by Fontes, Isopi, Newman and Ravishankar in \cite{FINR}. In the same paper they prove weak convergence to the Brownian Web under diffusive scaling of the system of simple symmetric one-dimensional coalescing random walks starting on each point in the space-time lattice $\mathbb{Z}^2$. Later Newman, Ravishankar and Sun \cite{NRS} proved an invariance principle related to the Brownian Web, they stablished the convergence to Brownian web  for systems of one-dimensional coalescing random walks under finite absolute fifth moment of the transition probability (allowing for crossing of paths unlike the nearest neighbor walks). 

More recently, Evans, Morris and Sen [EMS] studied a system of coalescing $\alpha$-stable processes, $\alpha > 1$, starting at every point in $\mathbb{R}$. As Arratia \cite{A2} did for Brownian Motion, they proved that the system of $\alpha$-stable processes are locally finite for every time $t>0$. And based on this, our main motivation here is to build a stable version of the Brownian Web or simply a "Stable Web" and also prove an invariance principle for it.  

In this note we make a first step at defining the stable web. Together with Hao Xue we will generalize the domain of applicability in a subsequent work and show that the object defined is equivalent to an object with a more general "smoother" topology.  Our objective here is simply to define a reasonable metric on the space of collections of cadlag paths that gives convergence to the "stable web" for suitably normalized coalescing random walks.

\section{\Large\bf The Age Process}
\label{sec:agedproc}

In the following we have for $ n \in \IZ_+  \ D_n \ = \ \IZ/2^n $.  

\noindent We follow  \cite{EMS} and consider systems of coalescing identically distributed stable processes $\overline{X}^{D_n} = \{X^{n,x} = (X_t^{n,x})_{t\geq 0}: x \in D_n\}$ with stable index $\alpha \in (1,2)$ such that $X_{0}^{n,x} = x = i 2^{-n}$ for integers $i$ and $n$. The stable processes evolve independently until coalescence. The rules of precedence will be arbitrary for points in $D_{n} / D_{n-1}$ but lower order points will have coalescence precedence so that $\overline{X}^{D_n} \subset \overline{X}^{D_{n+1}}$ for each $n \ge 1$. For $x \in D = \cup_{n \ge 1} D_{n}$, take $n$ such that $x \in D_n / D_{n-1}$ and simply write $X^{x,n} = X^x$. Moreover we denote  $\overline{X} \ = \ \cup_{n \ge 1} \overline{X}^{D_n}$   and for every $t>0$, $\overline{X}^{D_n}_{t} \ =  \ \cup _{x \in D_n} X_t ^{x}$ and $\overline{X}_t \ =  \ \cup _{x \in D} X_t ^{x}$ which are the time level sets associated to the set valued processes $\overline{X}^{D_n}$ and $\overline{X}$ respectively. Arguing as in \cite{BG} we have:

\begin{prop}
\noindent There exists  $K < \infty$ so that $\hspace{0.2cm} \forall \hspace{0.2cm} n \ge 1$ the density of the process, $ \overline{X}^{D_n}$ at time $t < \infty \hspace{0.2cm} D (n,t)$
given by the a.s. limit of   
$$
 \lim_{M\rightarrow \infty} \frac{1}{2M} \# \big(\overline{X}^{D_n}_{t} \cap [-M,M] \big) 
$$
satisfies $D (n,t) \leq \frac{K}{t^{\frac{1}{\alpha}}}$.
\end{prop}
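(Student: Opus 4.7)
The plan is to reduce, via $\alpha$-stable self-similarity, to the case $n = 0$, and then adapt the argument of \cite{BG} to the stable setting. For the first step, define the rescaled family $Y^i_s := 2^n X^{i/2^n}_{s/2^{n\alpha}}$ for $i \in \IZ$. By the scaling property of the $\alpha$-stable process, each $Y^i$ has the law of a standard $\alpha$-stable process starting at $i$, and the coalescing interaction of $\overline{X}^{D_n}$ transfers to $\{Y^i\}_{i \in \IZ}$ (the specific ordering of coalescence precedence affects labels but not the set of distinct values at any given time). Hence $\{Y^i\}_{i \in \IZ}$ has the same law as $\overline{X}^{D_0}$. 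Comparing densities in the two scales (positions in the $X$-scale are $2^{-n}$ times positions in the $Y$-scale) gives
$$ D(n,t) \ = \ 2^n D(0, 2^{n\alpha} t), $$
so a bound $D(0, s) \leq K / s^{1/\alpha}$ immediately implies $D(n, t) \leq K/t^{1/\alpha}$ with the same constant.

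For the base case $n = 0$, translation invariance of $\overline{X}^{D_0}$ together with the ergodic theorem ensure that the a.s.\ limit defining $D(0, t)$ is a deterministic constant equal to $\IE[D(0,t)]$. Since $D(0,t) \leq 1$ trivially, we may assume $t \geq 1$. The strategy is to show that for $M \geq t^{1/\alpha}$,
$$ \IE\big[ \# \big(\overline{X}^{D_0}_t \cap [-M, M] \big) \big] \ \leq \ C \cdot M / t^{1/\alpha}, $$
which, divided by $2M$, gives the desired bound. To this end, I would partition $\IZ \cap [-M, M]$ into blocks of length $\ell = \lceil t^{1/\alpha} \rceil$ and argue that the expected number of distinct walker values at time $t$ contributed by each block is $O(1)$, uniformly in $t$ and in the choice of block. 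Summing over the approximately $2M/\ell$ blocks then yields the claim.

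The key step---and the main obstacle---is the per-block $O(1)$ bound. A naive two-point argument fails to give the correct order: bounding $P(X^0_t \neq X^1_t)$ by the probability that the symmetric $\alpha$-stable difference of two isolated walkers has not yet hit zero yields only $t^{-1/2}$ (the Sparre-Andersen persistence exponent for symmetric L\'evy processes being $1/2$), which is weaker than the target $t^{-1/\alpha}$ when $\alpha < 2$. One must therefore exploit the collective effect of the many interacting walkers in a window of width $t^{1/\alpha}$, which causes coalescences to occur substantially faster than the isolated two-walker comparison would predict. The argument of \cite{BG}, suitably adapted with the $\alpha$-stable scaling, is expected to provide this refinement; heuristically, after a further stable rescaling each block of $\ell$ walkers in an interval of length $\ell$ running for time $\ell^\alpha$ becomes $\ell$ walkers in $[0,1]$ running for unit time, a configuration which, as $\ell \to \infty$, is well approximated by the continuous coalescing stable system of \cite{EMS} whose time-$1$ density is finite by self-similarity.
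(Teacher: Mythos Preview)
The paper gives no proof here beyond the phrase ``Arguing as in \cite{BG}'', so there is essentially nothing detailed to compare your outline against; what follows assesses your sketch on its own terms.

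Your reduction to $n=0$ via stable self-similarity is correct and worth recording: the identity $D(n,t)=2^n D(0,2^{n\alpha}t)$ is exactly how the uniform-in-$n$ statement collapses to a single case, and the paper leaves this implicit.

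One small slip in the two-point discussion. The Sparre--Andersen exponent $1/2$ governs the first-passage probability $P_1(\inf_{s\le t} Z_s>0)$ (staying positive), not the point-hitting probability $P_1(\tau_0>t)$ that controls two-walker coalescence. For a symmetric $\alpha$-stable difference process with $\alpha\in(1,2)$ one has $P_1(\tau_0>t)\asymp t^{-(1-1/\alpha)}$, which is \emph{even slower} than $t^{-1/2}$. Your conclusion---that the pairwise comparison cannot deliver $t^{-1/\alpha}$---survives, but the quoted mechanism is not the right one.

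The substantive gap is the closing heuristic. You rescale a block of $\ell\sim t^{1/\alpha}$ walkers to walkers at mesh $1/\ell$ in $[0,1]$ run for unit time, then appeal to \cite{EMS}. Two problems. First, the paper (and, as stated, \cite{EMS}) credits that reference only with almost-sure local finiteness of the continuum system, not with a bound on the \emph{expected} number of survivors in a bounded window; your per-block $O(1)$ claim needs the latter, and converting one to the other requires a uniform-integrability argument you have not supplied. Second, the limit ``$\ell$ walkers at mesh $1/\ell$ $\to$ full continuum system'' is precisely the kind of limit the paper is building toward via this very proposition together with Proposition~\ref{density}; invoking it here is at best a forward reference and at worst circular. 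In short, the per-block $O(1)$ survivor bound is the entire content of the proposition, and---like the paper itself---you ultimately defer it to \cite{BG} rather than carry it out.
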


\vspace{0.3cm}

\noindent It follows from the fact that $0$ is regular for the stable processes that if we choose a $x \notin D$ and start a stable process at $x$ at time $0$ which will coalesce with processes starting at $D$ (given precedence to the latter) then $P (X^{x}_{t} \in \overline{X}_{t} \ \forall \, t>0) = 1$.  For any $x \in \mathbb{R}$ we can indeed unambiguously define a stable process $(X^{x}_{t})_{t \geq 0}$ which has the same distribution as a stable process starting at $x$ and such that  $t> 0$, $X^{x}_{t} \in \overline{X}_{t}$. 
So we can think of the above process as a collection of coalescing stable processes starting ``on" $ \mathbb{R} $
\vspace{0.2cm}
\noindent For any set $E \subset \mathbb{R}$ we denote $\overline{X}^E = \{ X^x : x \in E\}$. It follows that if for any nested collection of ``translation invariant" points $V_{n}$ with $V \ = \ \cup_{n \geq 1} V_{n}$ dense we have (with the coalescence rules with $\overline{X}_{t}$ as above) that $\overline{X}^{V_{n}}_{t} \subset \overline{X}_{t} \hspace{0.2cm} \forall \hspace{0.2cm}  t > 0$  with probability 1. But equally we can show a.s that $\overline{X}_{t} \subset \overline{X}^{V}_{t}$. Furthermore for any strictly positive $c$ we can choose $V_{0}$ to be points spaced $c$ apart and containing $0$ and $V_{n}$ obtained from $V_{n-1} $ by adding the midpoints between neighbours. Then we have that 
$$
\overline{X}^{V_{n}}_{t} \stackrel{D}{\longrightarrow} \overline{X}_{t} \, , \quad 
\overline{X}^{D_{n}}_{t} \stackrel{D}{\longrightarrow} \overline{X}^V_{t} \quad \textrm{and} \quad 
\frac{1}{c} \hspace{0.2cm} \overline{X}^{V_{n}}_{c^{\alpha} t} \stackrel{D}{=} \overline{X}^{n}_{t} \, ,
$$
together this yields.

\begin{prop} \label{density}
\noindent For the process $\overline{X}_{t}$ the density at time $t$ is equal to $k/t^{\frac{1}{\alpha}}$ for some $k$ depending on our choice of the stable process.
\end{prop}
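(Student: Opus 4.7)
The plan is to exploit the $\alpha$-stable scaling inheritance of $\overline{X}_t$, specifically the identity $\frac{1}{c}\overline{X}^{V_n}_{c^\alpha t}\stackrel{D}{=}\overline{X}^{D_n}_t$ displayed just above the statement, together with the two distributional convergences in the same display and the a.s.\ identification $\overline{X}_t=\overline{X}^V_t$ established in the preceding paragraph. Passing $n\to\infty$ in the scaling identity should yield the self-similarity
$$\frac{1}{c}\,\overline{X}_{c^\alpha t}\stackrel{D}{=}\overline{X}_t,\qquad c>0,\ t>0,$$
from which the $t^{-1/\alpha}$ dependence is immediate.

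First I would verify that $D(t):=\lim_{M\to\infty}(2M)^{-1}\#\bigl(\overline{X}_t\cap[-M,M]\bigr)$ exists a.s., is a deterministic constant, and is finite. Existence and deterministic character should follow by the same translation-invariance / ergodic argument underlying Proposition~1, applied now to $\overline{X}_t$, which inherits shift invariance from the dense-skeleton approximations $\overline{X}^{V_n}_t$. Finiteness follows from the nested inclusions $\overline{X}^{D_n}_t\subset\overline{X}^{D_{n+1}}_t\subset\overline{X}_t$, the uniform bound $D(n,t)\le K\,t^{-1/\alpha}$ from Proposition~1, and monotone convergence of the counting function on each fixed window $[-M,M]$; this already yields $D(t)\le K\,t^{-1/\alpha}$. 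Once the self-similarity is in hand, a density on $\mathbb{R}$ scales under $x\mapsto x/c$ by the factor $c$, so the identity becomes $c\,D(c^\alpha t)=D(t)$, and the choice $c=t^{-1/\alpha}$ gives $D(t)=k\,t^{-1/\alpha}$ with $k:=D(1)$, a constant depending only on the underlying stable law.

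The hard part will be justifying the passage to the $n\to\infty$ limit in the scaling identity, since the topology in which the two convergences $\overline{X}^{V_n}_t\stackrel{D}{\longrightarrow}\overline{X}_t$ and $\overline{X}^{D_n}_t\stackrel{D}{\longrightarrow}\overline{X}^V_t$ are asserted is not a priori one that preserves linear density. I would sidestep this by working with the counting functional on a fixed window $[-M,M]$ and using the monotonicity supplied by the nested inclusions of the approximating systems rather than appealing directly to weak convergence of random measures; the uniform bound from Proposition~1 then furnishes the control needed to exchange limits and to let $M\to\infty$. With this step in place the identity $\frac{1}{c}\overline{X}_{c^\alpha t}\stackrel{D}{=}\overline{X}_t$ is established and the proof concludes with the one-line computation above.
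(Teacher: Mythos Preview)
Your proposal is correct and follows exactly the route the paper intends: the three displayed facts preceding the proposition (the two distributional convergences plus the scaling identity $\frac{1}{c}\overline{X}^{V_n}_{c^\alpha t}\stackrel{D}{=}\overline{X}^{D_n}_t$), together with the a.s.\ equality $\overline{X}_t=\overline{X}^V_t$, yield the self-similarity $\frac{1}{c}\overline{X}_{c^\alpha t}\stackrel{D}{=}\overline{X}_t$, from which the $k/t^{1/\alpha}$ form of the density is immediate. The paper simply writes ``together this yields'' and states the proposition, so you are in fact supplying more detail (on existence and finiteness of $D(t)$ and on the limit passage) than the paper does.
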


\vspace{0.3cm}

To construct the stable web we now consider coalescing stable processes starting at \emph{times} $t \in D$. 

\smallskip

The first step is for $D_{0} = \IZ^{1}$. We define the stable coalescing processes starting at $\{i\} \times \IR$.
 For $t \in (i, i + 1]$ let this process be $\overline{X}^{i}_{t}$. At time $t = i + 1$, $\overline{X}^{i}_{i + 1}$ will be a countable collection of points on $\{i +1\} \times \IR.$ As such they can be continued on interval $[i+ 1, i + 2]$ so that $\overline{X}^{i}_{s} \subset \overline{X}^{i + 1}_{s}$ for every $s \in [i+ 1, i + 2]$  with probability 1. Continuing we have $\forall \hspace{0.2cm} i < j \hspace{0.2cm} \overline{X}^{i}_{s} \subset \overline{X}^{j}_{s} \hspace{0.2cm} \forall \hspace{0.2cm} s > j$ a.s.

\smallskip

We now proceed in an analagous manner adding in stable coalescing processes at times $D_{n} / D_{n-1}$ to obtain a collection of processes $\{\overline{X}^{d}_{t}\}_{t > d}$ for $d \in D$ with the property that a.s. for every $d < d'$, $t > d'$ we have that $\overline{X}^{d}_{t} \subset \overline{X}^{d'}_{t}$. We use the notation $X^{d,d'}=(X^{d,d'}_t )_{t \geq d} $ to denote the stable process beginning at (dyadic) time $d$ at spatial (dyadic) point $d'$. 


We now define the \emph{age} of a process (or path) $(\gamma(s))_{s> d}$ of  $\overline{X}^{d}$: the age of $(\gamma(s), s)$ is simply $s \ - \ \inf (d' < d: \gamma(s) \in \overline{X}^{d'}_{s}) .$  So the age of $ ( \gamma (s),s) $ increases continuously  at rate 1 but then jumps when the path coalesces with an older path.  
\smallskip 

From this we can for each $\epsilon$ consider the $\epsilon$-processes or $\epsilon$-paths consisting of paths whose age is greater than or equal to $\epsilon$: For every $d, d' \in D $, for the path with $Y=X^{d,d'}_d  = d' $, we let $A_Y ( \delta ) = \inf \{s > d: (X^{d,d'}_s,s) \mbox{  has age } \geq \delta\}
$, then the path $(Y_t)_{t \geq d} $ is replaced by $(Y_t)_{t \geq A_Y( \epsilon)}$ which we call an $\delta$-process.  Since we will be interested in distinct $\delta$-paths, we can regard paths for which the age at $A( \delta )$ strictly exceeds $\delta$ (i.e. the age is raised as the result of a coalescence with an ``older" path)  as being suppressed. Thus in a certain way we are imposing that older paths have coalescence precedence over the others. We leave the reader to check that there is no infinite regress and not all $\delta$-paths are suppressed and the union of all points touched by unsuppressed $\delta$-paths at a time $t$ gives all the points of age greater than $\delta$ at time $t$.   We similarly note that almost surely no path can achieve age precisely $\delta$ via a coalescence.  We write $\Phi_\delta $ for this operation on the set of aged paths.  

In general we say a collection of aged paths is a set of triples $(I, \gamma, a)$ (where $I$ is an open interval, $\gamma: I \to \IR $ is a cadlag function and
$a: I \to \IR _+$ is a cadlag function satisfying for $s' < s$, $a(s) \geq a(s') + (s-s')$. When $I = (b,\infty)$ we simply write $(I, \gamma, a) = (b, \gamma, a)$. 

We put  $\mathcal{X} = \cup_{d \in D} \{ ([d,\infty), Y,a_Y): Y \in \overline{X}^{d}\}$  and $\mathcal{X}_\delta = \Phi_\delta (\mathcal{X})$ as our systems of coalescing stable aged processes and coalescing $\delta$-processes. Given a countable dense collection of space time points $\{(x_i, t_i ) \}_{i=1} ^ \infty $ we can also define for  stable aged processes beginning at these space time points which are subordinate (or contained in)  to our "stable web" aged process. 

In our system of paths, as already noted, a $\delta$-path $\gamma : [c, b] \rightarrow \mathbb{R}$ is a path so that  $(\gamma(c), c)$ has age at least $\delta$. If $(\gamma (c), c)$ has age exactly $\delta$ we say it is $\delta$-fixed path.

\smallskip

Finally note that Proposition \ref{density} yields the following corollary:
\begin{cor} \label{denage}
The density of points of $\mathcal{X}$ with age in the interval $(a, a+ \epsilon) $ at a given time $t$ is equal to $k/ a^{1/ \alpha} \ - \ k/ (a+ \epsilon ) ^{1/ \alpha} $.
\end{cor}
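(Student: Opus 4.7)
The plan is to reduce the corollary to computing, for each $a > 0$, the density of the set
\[
A_a(t) \; := \; \{ x \in \overline{X}_t : (x,t) \text{ has age } \geq a \},
\]
and then subtracting. By the definition of the age, a point $x \in \overline{X}_t$ has age $\geq a$ precisely when $x \in \overline{X}^{d}_t$ for some dyadic $d \leq t - a$, since the age equals $t - d_{0}$ with $d_{0}$ the earliest dyadic start time such that $x \in \overline{X}^{d_0}_{t}$. The nesting $\overline{X}^{d}_t \subset \overline{X}^{d'}_t$ for $d < d' < t$ built into the construction then exhibits $A_a(t)$ as the increasing union
\[
A_a(t) \; = \; \bigcup_{d \in D,\, d \leq t-a} \overline{X}^{d}_t \, .
\]

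Next, by the time-translation invariance of the construction, each $\overline{X}^{d}_t$ is equidistributed with $\overline{X}_{t-d}$, and Proposition \ref{density} yields the a.s.\ density $k/(t-d)^{1/\alpha}$. I then sandwich $A_a(t)$ between sets of known density. For $d \in D$ with $d < t-a$ one has $\overline{X}^{d}_t \subset A_a(t)$, giving $\mathrm{dens}(A_a(t)) \geq k/(t-d)^{1/\alpha}$; letting $d \uparrow t-a$ along $D$ produces the lower bound $\mathrm{dens}(A_a(t)) \geq k/a^{1/\alpha}$. Conversely, for $d \in D$ with $d > t-a$, any point $x$ of age $\geq a$ lies in some $\overline{X}^{d''}_t$ with $d'' \leq t-a < d$, hence by nesting also in $\overline{X}^{d}_t$; thus $A_a(t) \subset \overline{X}^{d}_t$, so $\mathrm{dens}(A_a(t)) \leq k/(t-d)^{1/\alpha}$, and letting $d \downarrow t-a$ gives the matching upper bound. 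Hence $\mathrm{dens}(A_a(t)) = k/a^{1/\alpha}$.

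Since $A_{a+\epsilon}(t) \subset A_a(t)$, the set of points of age in $(a, a+\epsilon)$ at time $t$ is $A_a(t) \setminus A_{a+\epsilon}(t)$, up to an a.s.\ null event that some path achieves the prescribed age via a coalescence (excluded by the remark made earlier in the paper). Its density is therefore the difference $k/a^{1/\alpha} - k/(a+\epsilon)^{1/\alpha}$, as asserted.

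The main obstacle is verifying that the spatial-average limit defining the density actually exists for the random sets $A_a(t)$ and transports along the inclusion sandwich above. This relies on the spatial translation invariance of the construction together with a standard ergodic or monotone-convergence argument on the counting functions $\#(\cdot \cap [-M,M])$, and is where essentially all the technical work lies; the algebraic manipulation of the densities is then immediate.
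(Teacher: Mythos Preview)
Your proof is correct and is exactly the natural unpacking of why Proposition~\ref{density} implies the corollary; the paper itself gives no proof beyond the line ``Proposition~\ref{density} yields the following corollary,'' so you have simply written out the intended argument in full. The sandwich $\overline{X}^{d_1}_t \subset A_a(t) \subset \overline{X}^{d_2}_t$ for dyadic $d_1 < t-a < d_2$, together with time-translation invariance and the nesting $\overline{X}^{d}_t \subset \overline{X}^{d'}_t$ for $d<d'$, is precisely what is needed, and your remark about the boundary ages being a null event matches the paper's own observation that almost surely no path attains a prescribed age exactly via a coalescence.
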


\medskip

We will also need the following result which follows from the fact that the density of processes of age at least $M$ at a particular time tends to zero as $M$  becomes large.

\smallskip

\begin{lemma} \label{nobigage}
Given $\epsilon > 0 $ and $N < \infty$, there exists $M = M(\epsilon,N) < \infty$ so that outside probability $\epsilon $ every path of $\mathcal{X}$ that intersects space time square $[-N,N]^2$ has age less than $M$ at time $N$. 
\end{lemma}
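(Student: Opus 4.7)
The argument rests on combining Corollary \ref{denage} (density of age-$\geq M$ points is at most $k/M^{1/\alpha}$, vanishing as $M\to\infty$) with the standard $\alpha$-stable tail estimate $\mathbb{P}(\sup_{s\leq T}|X_s-z|\geq r)\leq CT/r^\alpha$.

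Fix $\epsilon>0$ and $N<\infty$, and let $L\geq N$ and $M\geq 2N$ be parameters to be chosen (first $L$ as a function of $N,\epsilon$, then $M$ as a function of $N,L,\epsilon$). I decompose the failure of the conclusion as $\mathcal{A}_1\cup\mathcal{A}_2$, where $\mathcal{A}_1$ is the event that $\overline{X}_N\cap[-L,L]$ contains some age-$\geq M$ point, and $\mathcal{A}_2$ is the event that some path of $\mathcal{X}$ intersecting $[-N,N]^2$ has $|Y(N)|>L$. Corollary \ref{denage} and Markov's inequality immediately give $\mathbb{P}(\mathcal{A}_1)\leq 2Lk/M^{1/\alpha}\leq\epsilon/2$ once $M\geq(4Lk/\epsilon)^\alpha$. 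For $\mathcal{A}_2$ I track each offending path $Y$ via its oldest ancestor $Y^*$ at time $N$: $Y^*$ is a free $\alpha$-stable process on $[d^*,N]$ with $d^*\leq N-M\leq-N$ and $Y^*(N)=Y(N)$, so its restriction to $[-N,N]$ is a single stable sample path, and after the coalescence time $t_c$ one has $Y(s)=Y^*(s)$, whence $|Y(N)|=|Y^*(N)|$. The intensity of these ancestor trajectories in $Y^*(-N)\in dz$ is at most $k/(M-2N)^{1/\alpha}\,dz$ by another application of Corollary \ref{denage}, and integrating against the tail estimate for the displacement $|Y^*(N)-Y^*(-N)|$ bounds the expected number of such trajectories with $|Y^*(N)|>L$ by a constant depending on $L,N$ times $k/(M-2N)^{1/\alpha}$, which is $\leq\epsilon/2$ for $M$ large.

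The main technical obstacle is that $Y^*$ itself need not visit $[-N,N]^2$: a young path $Y$ may enter the box and only later coalesce into $Y^*$'s lineage at a point $p=Y^*(t_c)$ outside $[-N,N]$. This is handled by noting that for a coalescing system of $\alpha$-stable processes the ``backward catchment'' of $(p,t_c)$ has spatial extent of order $(t_c-t^*)^{1/\alpha}\leq(2N)^{1/\alpha}$, so for a young path from $[-N,N]$ at time $t^*$ to reach $p$ the point $p$ must lie within $(2N)^{1/\alpha}$ of $[-N,N]$; thus, outside a further small-probability event, the ancestor trajectory $Y^*$ visits the slightly enlarged box $[-N',N']^2$ with $N'=N+(2N)^{1/\alpha}$, reducing the delicate case to the same integration against stable-tail estimates as above. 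A union bound over $\mathcal{A}_1$ and $\mathcal{A}_2$ then gives $\mathbb{P}(\mathcal{A}_1\cup\mathcal{A}_2)<\epsilon$ and completes the proof.
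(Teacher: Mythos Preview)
Your route diverges substantially from the paper's and carries a real gap. The paper does not split into ``near'' and ``far'' at time $N$ at all. It simply observes that any stable path which hits $[-N,N]^2$ has, by the strong Markov property applied at the hitting time, conditional probability at least some $c(N)>0$ of lying in the spatial interval $[-N,N]$ at time $N$. Hence the expected number of age-$\geq M$ paths that hit the box is at most $1/c(N)$ times the expected number of age-$\geq M$ points in $[-N,N]$ at time $N$, namely $2kN/(2N+M)^{1/\alpha}$, and one lets $M\to\infty$. No auxiliary scale $L$, no ancestor-tracking, no tail integrals.

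The gap in your argument is in the bound for $\mathcal{A}_2$. As written, ``integrating against the tail estimate for $|Y^*(N)-Y^*(-N)|$'' is ill-posed: the ancestor positions $Y^*(-N)$ range over all of $\IR$ with constant density $k/(M-2N)^{1/\alpha}$, and without a spatial restriction on $z=Y^*(-N)$ the expected count of $Y^*$ with $|Y^*(N)|>L$ is infinite. You try to supply the restriction via the ``backward catchment'' claim, but the assertion that the catchment of $(p,t_c)$ has extent $(t_c-t^*)^{1/\alpha}\leq (2N)^{1/\alpha}$ is only a scaling heuristic, not a bound: an $\alpha$-stable path can travel arbitrarily far in time $2N$, and the phrase ``outside a further small-probability event'' is never quantified. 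Worse, there are infinitely many young paths $Y$ issuing from dyadic points of $[-N,N]^2$, so a na\"ive union bound over them cannot close the argument either. To make your approach rigorous you would need a statement of the form ``if some descendant of $Y^*$ hits $[-N,N]^2$ then with probability bounded below $Y^*$ itself lies in a fixed compact interval at a reference time'', and that is exactly the $c(N)$ device the paper invokes in one line---at which point the $\mathcal{A}_1/\mathcal{A}_2$ decomposition and the parameter $L$ become unnecessary detours.
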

\begin{proof}
This simply follows from the fact that by Proposition \ref{density}, the density of coalescing processes, started at time $-(N+M)$ has density $k/(2N+M)^{1/ \alpha}$ at time $N$. So the chance that one such process is in spatial interval $[-N,N]$ at time $N$ is less than $2kN/(2N+M)^{1/ \alpha}$.   Let $c(N) > 0$ be the infimum of  the conditional probability  a path be in $[-N,N]$ at time $N$  given that it hits $[-N,N]^2$  . The probability that the event of interest occurs is bounded above by $2kN/c(N)(2N+M)^{1/ \alpha}$ which will be less than $\epsilon$ for large enough $M$ depending on $N$ and $\epsilon$.
\end{proof}

\smallskip

\smallskip

\section{\Large\bf Topology}
First recall (see e.g. \cite{EK}) the definition of the $\delta$-modulus of continuity for a cadlag path $\gamma:[c,d] \rightarrow \mathbb{R}$:
$$
\omega ( \delta, \gamma, [e,f]) \ = \ 
\inf _{ t_i - t_{i-1} \geq \delta} \sup _ i \sup _{s,t \in  [ t_{i-1}, t_i )} | \gamma (t) - \gamma (s)|
$$
for $[e,f]  \subset [c,d]$.  This quantity is important  for determining the compactness of  sets of cadlag paths.

We use the metric $d_1$ between two cadlag paths $\gamma_{1} : [a, b] \rightarrow \IR$ and $\gamma_{2} : [c, d] \rightarrow \IR$  (typically but not always we will have $b \ = \ d \ = \ \infty$) where  
\begin{eqnarray*}
d_1 (\gamma_{1}, \gamma_{2}) & = &|tanh(a) - tanh(c) | +  \\ 
& & \!\!\!\!\!\!\!\!\!\!\!\!\! \inf _{ g:[a,b] \to [c, d] } \left[   \sup_{a \leq t \leq b} e^{-|t|}|( ( \gamma_1  (t), t) -( \gamma_{2} (g (t)), g(t)) | \wedge 1 ) + \sup_{a < s < b} e^{-|s|}|g'(s)-1| \right]  \, ,
\end{eqnarray*}
where  the infimum is over continuous piecewise differentiable  bijections $g$. This amounts to a compactification of space-time as in \cite{FINR}.  When dealing with finite space time rectangles $[S,T] \times [A,B] $, we will use the equivalent metric
\begin{eqnarray*}
d (\gamma_{1}, \gamma_{2}) & = & |a - c | + \\
& & \inf_{g:[a,b] \to [c, d]} \left[   \sup_{a \leq t \leq b} ( ( \gamma_1  (t), t) -( \gamma_{2} (g (t)), g(t)) | \wedge 1 ) + \sup_{a < s < b} |g'(s)-1| \right] \, .
\end{eqnarray*}

\smallskip

In dealing with aged paths defined over finite intervals 
$$
(\gamma_{i}(t), a_i(t)) : [c_i, b_i] \rightarrow \IR \times (0, \infty )
$$
for $i = 1,2$, we simply take 
$$
d( (\gamma_1, a_1), (\gamma_2, a_2) ) \ = \ d( \gamma_1, \gamma_2 )
\vee d ( a_1,a_2 ) 
$$
and similarly for $d_1$.
\vspace{0.1cm}
In the following, when speaking of distance between aged paths $( \gamma_1, a_1), (\gamma_2, a_2)$, we will abuse notation and write $d_1( \gamma_1, \gamma_2 )$.
It follows immediately,

\medskip

\begin{lemma} For a cadlag function $f:[0, T] \to \IR$ let $f^{\eta}$ be its restriction to $[\eta, T]$ (for $\eta > 0$). 
$\forall \sigma > 0$, there exists $\eta_{0}$ so that $d(f, f^{\eta}) < \sigma$ for every $0 \leq \eta \leq \eta_{0}.$
\end{lemma}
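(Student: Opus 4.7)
My plan is to produce an explicit continuous piecewise differentiable bijection $g:[0,T]\to[\eta,T]$ that is close to the identity in slope and realizes a small value of the infimum in the definition of $d$. The naive globally affine choice $g(t) = \eta + t(T-\eta)/T$ would be perfect for continuous $f$, but is dangerous when $f$ has jumps: even though $|g(t)-t|\le\eta$, this small shift can place $t$ and $g(t)$ on opposite sides of a jump and force $|f(t)-f(g(t))|$ to be of the order of the jump size. My remedy is to confine all the deviation of $g$ from the identity to a short interval near $0$.

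Fix $\sigma>0$. First I would invoke right-continuity of $f$ at $0$ to pick $a_1 \in (0, T]$ with
$$
\sup_{0 \le t < a_1} |f(t) - f(0)| < \sigma/8,
$$
so that the oscillation of $f$ on $[0, a_1)$ is at most $\sigma/4$. Then for $0 < \eta < a_1$ I would define $g:[0,T]\to[\eta,T]$ piecewise linearly: on $[0,a_1]$ the affine map sending $0\mapsto\eta$ and $a_1\mapsto a_1$ (slope $1 - \eta/a_1$), and $g(t)=t$ on $[a_1,T]$. This $g$ is a continuous piecewise differentiable bijection with $|g(t)-t|\le\eta$ everywhere and $|g'(s)-1|\le\eta/a_1$ wherever $g'$ exists.

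Plugging this $g$ into the definition of $d$, the prefix $|0-\eta|$ contributes $\eta$; on $[a_1,T]$ the pointwise matching term vanishes since $g$ is the identity there; on $[0,a_1)$ both $t$ and $g(t)$ lie in $[0,a_1)$, so $|f(t)-f(g(t))|\le\sigma/4$ and $|t-g(t)|\le\eta$, giving a spatio-temporal sup at most $\sigma/4+\eta$; the slope term contributes $\eta/a_1$. Altogether $d(f,f^\eta)\le 2\eta + \sigma/4 + \eta/a_1$, which is less than $\sigma$ once $\eta_0=\eta_0(\sigma,a_1)$ is chosen small enough (depending on $f$ only through $a_1$). The case $\eta=0$ is trivial since $f^0=f$ and $d(f,f)=0$.

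The main obstacle, in my view, is jump-handling: a globally affine reparametrization would cross jumps of $f$ and introduce matching errors comparable to the jump sizes. The observation that unlocks the localized construction is that right-continuity of $f$ at $0$ alone is enough to find a short initial interval on which $f$ is nearly constant; concentrating the distortion of $g$ there and keeping $g$ the identity on the rest of $[0,T]$ ensures every jump of $f$ is matched exactly by the time change. No Billingsley-type global modulus argument is needed for this one-sided statement.
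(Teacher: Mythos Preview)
Your proof is correct and is essentially identical to the paper's: both pick a short initial interval (your $a_1$, the paper's $h$) on which $f$ is close to $f(0)$ by right-continuity, define $g$ to be the identity on $[a_1,T]$ and affine from $[0,a_1]$ onto $[\eta,a_1]$, and read off the same bound $d(f,f^\eta)\le \eta + \eta/a_1 + 2\sup_{s<a_1}|f(s)-f(0)|$. Your additional discussion of why a globally affine $g$ would fail is a helpful motivation but not a difference in strategy.
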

\begin{proof}
Take $h $ to be such that $\sup_{s \leq h} |f(s)-f(0)| \ \leq \ \sigma/10.$  Now (for $h\ > \  \eta \  > \ 0 $) define path $g: \ [0,T] \rightarrow [\eta, T $] by
$$
g(s) = \ s \ \mbox{ for } s \in [h,T] ; \quad g( . ) \ \mbox{ is linear bijection } [0,h] \ \rightarrow \ [\eta ,h].
$$
Then this shows that $ d(f, f^{\eta}) < 2 \sigma / 10 + \ \eta/h \ + \ \eta $ and so the result follows.
\end{proof}

\medskip

The above argument in fact yields 
\begin{lemma}
For any $ h > \eta $,
$$
d(f^{\eta}, f) \leq \eta \ + \ \frac{\eta }{h} + 2 \sup_{0 \leq s \leq h} \vert f(0) - f (s) \vert \, .
$$
If we are dealing with aged paths
$$
d((f^{\eta},a^ \eta ) , (f,a)) \leq \eta \ + \ \frac{\eta }{h} + 2 ( \sup_{0 \leq s \leq h} \vert f(0) - f (s) \vert \vee a(h)-a(0)) \, .
$$
\end{lemma}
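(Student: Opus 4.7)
The plan is to reuse verbatim the construction from the previous lemma, this time keeping track of the constants rather than absorbing everything into a tolerance $\sigma$. Define the continuous piecewise-linear bijection $g:[0,T]\to[\eta,T]$ by $g(s)=s$ for $s\in[h,T]$ and $g$ affine on $[0,h]$ with $g(0)=\eta$ and $g(h)=h$. I would then evaluate each of the three ingredients in the definition of $d$ on this specific $g$, which is enough since the metric is an infimum over such bijections.

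First, $|a-c|=|0-\eta|=\eta$. Next, $g'(s)=1-\eta/h$ on $(0,h)$ and $g'(s)=1$ on $(h,T)$, so $\sup_{0<s<T}|g'(s)-1|=\eta/h$. For the space-time deviation term, the sup over $t\in[h,T]$ vanishes because $g(t)=t$ and $f^{\eta}$ agrees with $f$ on $[\eta,T]$; over $t\in[0,h]$ both $t$ and $g(t)$ lie in $[0,h]$, and the triangle inequality through $f(0)$ gives $|f(t)-f(g(t))|\leq 2\sup_{0\leq s\leq h}|f(s)-f(0)|$. Summing the three contributions produces the first inequality.

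For the aged-path bound, recall that $d((f,a),(f^{\eta},a^{\eta}))=d(f,f^{\eta})\vee d(a,a^{\eta})$, so the same $g$ can be used for both components: the $|a-c|$ and slope terms are identical. The defining property $a(s)\geq a(s')+(s-s')$ forces $a$ to be nondecreasing, so for $t,g(t)\in[0,h]$ the analogue of the triangle-inequality step through $a(0)$ gives $|a(t)-a(g(t))|\leq 2\sup_{0\leq s\leq h}|a(s)-a(0)|=2(a(h)-a(0))$. Taking the maximum of the two bounds and using $(2A)\vee(2B)=2(A\vee B)$ yields the second claim.

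There is no genuine obstacle here; the result is a direct refinement of the previous lemma with constants made explicit. The only point worth checking is that the construction remains valid as soon as $h>\eta$ (rather than $h$ much larger than $\eta$), which is immediate from the formula for $g$, and that the monotonicity of $a$ is invoked only to enable the same triangle-inequality step used for $f$.
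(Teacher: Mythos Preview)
Your proposal is correct and matches the paper's approach exactly: the paper gives no separate proof for this lemma, stating only that ``the above argument in fact yields'' it, where ``the above argument'' is precisely the bijection $g$ you describe from the preceding lemma. Your explicit bookkeeping of the three contributions ($|a-c|=\eta$, the slope term $\eta/h$, and the oscillation term via the triangle inequality through $f(0)$) and your use of the monotonicity of $a$ to rewrite $\sup_{0\le s\le h}|a(s)-a(0)|$ as $a(h)-a(0)$ are exactly what is needed to make the implicit argument explicit.
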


\medskip

\begin{cor} For a stable process $(X(t) : t \geq 0)$, $\forall \sigma \ > \ 0$ there exists $\eta_{0} >0$ so that $\forall \ T>0$,
$$
P ( \forall \ \eta \leq \eta_{0} ,d(X, X^{\eta})< \sigma)   > 1 - \frac{\sigma^{2}}{10^{6}} \, ,
$$ 
where we consider the restrictions of $X$ and $X ^ \eta $ to $[0,T]$ and $[\eta, T]$ respectively.  
\end{cor}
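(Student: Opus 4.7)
The plan is to invoke the preceding lemma with well-chosen $h$ and $\eta_0$. That lemma gives, for any $h > \eta > 0$,
$$d(X, X^\eta) \leq \eta + \frac{\eta}{h} + 2 \sup_{0 \leq s \leq h}|X(s) - X(0)| \, .$$
Two observations drive the argument. First, the right-hand side does not depend on $T$, which is why the probability estimate will be automatically uniform in $T$. Second, with $h$ fixed the bound is monotone in $\eta$, so once $\eta_0 < h$ a single good event of the process controls every $\eta \leq \eta_0$ simultaneously.

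First I would choose $h = h(\sigma)$ small enough that
$$P\Bigl( \sup_{0 \leq s \leq h}|X(s) - X(0)| \geq \sigma/8 \Bigr) < \frac{\sigma^2}{10^6} \, .$$
Such an $h$ exists because the stable process is cadlag and starts at $X(0)$, so $\sup_{0 \leq s \leq h}|X(s) - X(0)| \to 0$ almost surely, hence in probability, as $h \downarrow 0$. Equivalently one may invoke the self-similarity $X(hs) \stackrel{d}{=} h^{1/\alpha} X(s)$ (applied to the supremum over $s \in [0,1]$) to see that the left-hand random variable has the law of $h^{1/\alpha}\,\sup_{0 \leq s \leq 1}|X(s)|$ and so can be made arbitrarily small by shrinking $h$; any tail estimate for $\sup_{0 \leq s \leq 1}|X(s)|$ makes this quantitative.

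Then set $\eta_0 := \min\{h\sigma/4,\ \sigma/4\}$. For any $\eta \leq \eta_0$ one has $\eta \leq \sigma/4$ and $\eta/h \leq \sigma/4$, so $\eta + \eta/h \leq \sigma/2$. On the good event $\{\sup_{0 \leq s \leq h}|X(s) - X(0)| < \sigma/8\}$, every $\eta \leq \eta_0$ therefore satisfies
$$d(X, X^\eta) \leq \frac{\sigma}{2} + 2 \cdot \frac{\sigma}{8} = \frac{3\sigma}{4} < \sigma \, ,$$
and this event has probability strictly greater than $1 - \sigma^2/10^6$, uniformly in $T$. The only genuinely probabilistic input is the tail bound on the near-zero oscillation, which is routine given right-continuity (or scaling) of the stable process, so there is no substantive obstacle; the rest of the argument is a bookkeeping application of the preceding lemma.
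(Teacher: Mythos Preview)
Your proof is correct and is exactly the argument the paper has in mind: the corollary is stated without proof precisely because it follows immediately from the preceding lemma by choosing $h$ so that the near-zero oscillation of the stable process is small with the required probability, and then taking $\eta_0$ small relative to both $h$ and $\sigma$. Your bookkeeping with $\eta_0=\min\{h\sigma/4,\sigma/4\}$ and the monotonicity in $\eta$ is the natural way to make this explicit.
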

 Similarly, if for process $X$ and time interval $I$ within its domain of definition, we write $X^I $ as the process with time  restricted to $I$, then we have

\begin{cor} \label{corstab} For a stable process $(X(t) : t \geq 0)$, $\forall \sigma \ > \ 0, \ \epsilon > 0$ there exists $\eta_{0} >0$ so that $\forall \ t> \ \epsilon$ and
$0 \leq \eta_1, \eta_2 \ \leq \eta_0 $,
$$
d(X^{[0,t]} , X^{[\eta_1,t + \eta_2]} ) <  \sigma 
$$
outside probability  $\frac{\sigma^{2}}{10^{6}}$.
\end{cor}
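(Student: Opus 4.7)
The plan is to construct a single reparametrization $g : [0,t] \to [\eta_1, t + \eta_2]$ that handles both endpoint shifts simultaneously, exactly analogous to the one in the preceding lemma but with a linear piece at \emph{each} end of the interval, and then to control the resulting path mismatches via self-similarity and stationary increments of the stable process.

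First I would fix $h > 0$ (to be chosen small) with $h < \epsilon/2$ and require $\eta_0 \leq h$, so that in particular $h < t/2$ always. Define $g$ to be the linear bijection $[0,h] \to [\eta_1,h]$, the identity on $[h, t-h]$, and the linear bijection $[t-h,t] \to [t-h, t+\eta_2]$. Then immediately
$$
\sup_s |g'(s)-1| \leq \eta_0/h, \qquad \sup_s |s-g(s)| \leq \eta_0, \qquad |0-\eta_1| \leq \eta_0\, ,
$$
while the spatial mismatch $|X(s)-X(g(s))|$ vanishes on $[h,t-h]$ and is bounded on the two boundary intervals by $2\sup_{u \in [0,h]} |X(u) - X(0)|$ and $2\sup_{u \in [t-h,\,t+\eta_2]}|X(u)-X(t-h)|$ respectively.

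Both of these suprema, by stationary increments, have the law of $\sup_{u \in [0,\ell]}|X(u)|$ for some $\ell \leq 2h$, which by self-similarity is distributed as $\ell^{1/\alpha}\sup_{u \in [0,1]}|X(u)|$; this is a.s.\ finite since $\alpha > 1$ and the stable process has cadlag paths. Hence for $h$ small enough both fluctuations are below $\sigma/10$ outside probability $\sigma^2/(2\cdot 10^6)$ each, and shrinking $\eta_0$ further to force $\eta_0/h$ and $\eta_0$ themselves below $\sigma/10$, a union bound yields $d(X^{[0,t]}, X^{[\eta_1, t+\eta_2]}) < \sigma$ outside total probability $\sigma^2/10^6$, uniformly in $t > \epsilon$, since stationary increments make the law of the right-boundary fluctuation independent of $t$.

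There is no substantial obstacle here: this is a two-sided analogue of the preceding lemma, with the uniformity in $t$ provided for free by the stationarity of the increments of $X$.
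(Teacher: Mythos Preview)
Your argument is correct and is precisely the two-sided version of the preceding lemma that the paper's ``Similarly'' gestures at: the paper gives no separate proof of this corollary, and your construction of $g$ with linear pieces at both ends, together with the stationarity-of-increments and self-similarity estimates, is exactly what is intended. (One small remark: the a.s.\ finiteness of $\sup_{u\in[0,1]}|X(u)|$ follows simply from cadlag paths on a compact interval and does not require $\alpha>1$.)
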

\medskip

We will examine the systems of aged paths $\mathcal{X}$ and $\mathcal{X}_\delta$ considering them as random elements of a proper path space which we now define in a natural way based on our previous considerations. Let $G$ be the space of aged paths: $\{(b,\gamma,a)\} $ where  $b \in \mathbb{R}$,  $\gamma$ and $a$ are cadlag functions defined on $(b, \infty)$ and $a$ satisfies $a(s) \geq a(s') + (s-s')$ for every $s>s' >b$. For each time-space rectangle $[S,T]\times [A,B]$, we define the ``restriction" of an aged path $\{ (b, \gamma, a )  \}$ to $[S,T]\times [A,B]$ as the triple
$$
\{ (b^{[S,T]\times [A,B]}, \gamma ^ {[S,T]\times [A,B]}, a ^ {[S,T]\times [A,B]})  \} 
$$ 
where:\\
$$
b^{[S,T]\times [A,B]} \ = \ \inf\{r>b \vee S: \gamma(r) \in [A,B]  \}
$$ 
and if $b> T$, 
$$
( \gamma ^ {[S,T] \times [A,B]}, a ^ {[S,T]\times [A,B]}) \  = \  \emptyset \, ,
$$
otherwise, for $r>b^{[S,T]\times [A,B]}$, 
$$
\gamma ^ {[S,T]\times [A,B]} (r) \ = \ ( \gamma(r) \vee A) \wedge B 
$$ 
and define $a^ {[S,T]\times [A,B]} (.)$ analogously.

For $S = A = -N, \ B = T = N$, we denote by $\Psi_N $ the map that associates to a path in $G$ its restriction.  We note that this is not a continuous operator for the given metric between paths.

We have the semimetric $\rho_{[S,T]\times[A,B]}$  on $G$ defined by the d-distance between $(b, \gamma, a ) $ and $(b', \gamma ', a' )$ restricted to 
interval $[S,T]\times[A,B]$.  

We denote the composition $ \Psi_N \circ \Phi_{2^{-N}} $ by $\Pi_N$ (recall $\Phi_\delta$ is defined at the end of Section \ref{sec:agedproc}).
We now consider the metric between aged paths (which by abuse of notation we also denote as $\rho$)  by 
$$
\rho ( (b, \gamma, a), (b', \gamma', a') )  \ = \  \sum_{N=1}^\infty 2^{-N} \min \big\{ 1,  \rho_{[-N,N]^ 2} \big( \Pi_N(b, \gamma, a), \Pi_N(b', \gamma', a') \big) \big\}  \, .
$$
 (Here for each $N$ in evaluating $\rho_{[-N,N]^ 2}$, $ \emptyset $ is regarded as a path at distance $1$ from each distinct other path).

The metric is artificial in that it privileges certain cutoffs and rectangles, whereas the spatial or temporal integer values are not special for stable processes and the ages $2^{-N}$ are not significant for our coalescing system. However this is a positive in our approach as we will be able to argue that the lack of continuity of our projections $\Pi_N $ is ultimately not a problem.

We, as usual, take $H$ to be the set of closed subsets of $G $ with the Hausdorff metric between closed sets based on the just defined metric $ \rho $ between paths.
Extending the abuse of notation, we still use $\rho $ to denote this metric.
The law of the stable web will be a probability measure on open (Borel) subsets of $H$ with respect to this metric.
 We have the usual criterion for tightness (see \cite{FINR}).



For every $N\ge 1$ fix $\epsilon_N > 0$, $M_N > 0$ and $\delta_N \in [0,1)^\IN $ a sequence tending to zero and put $\vartheta = (\epsilon_N, M_N, \delta_N)_{N \geq 1}$. We denote by $K(\vartheta)$ the set of collections of aged paths in $G$ such that for each collection $C \in K(\vartheta)$ and for each integer $N \geq 1$:
\begin{enumerate}
\item[(i)] the number of paths in $\Pi_N (C) $ is less than $M_N$;
\item[(ii)] the age of every path in $\Pi_N (C) $ is less than $M_N$ throughout; 
\item[(iii)]  every path in $\Pi_N (C) $ is contained in $[- N, N] \times [- M_N, M_N]$;
\item[(iv)]  every path, $(b,\gamma,a) \in \Pi_N (C) $ has $\omega ( 2^{-r}, \gamma, [T_N,N]) \leq \delta_N (r)$ for every $r \in \mathbb{N}$, where $T_N = b^{[-N,N]^2}$; 
\item[(v)]  every path, $(b,\gamma,a) \in \Pi_{N+1} (C) $ has $\gamma (T_N) \in [- N+ \epsilon _N, N-  \epsilon _N]$
and prior to this it did not enter $ [- N- \epsilon _N, N+  \epsilon _N]$;
\item[(vi)]  the age process of every path $\gamma \in \Pi_{N+1} (C)$ makes no jump while the age has value in $[2^{-N}
- \epsilon_N,2^{-N}
+ \epsilon_N ]$; 
\item[(vii)] the age process of every path $\gamma \in \Pi_N (C)$ makes no jumps within time $\epsilon_N$ of each other;
\item[(viii)] the age process of every path $\gamma \in \Pi_{N+1} (C)$ does not have age in interval $(2^{-N} - \epsilon_N, 2^{-N} + \epsilon_N)$
at times in   $[-N- \epsilon_N , -N + \epsilon_N]$  and $[N- \epsilon_N, N+ \epsilon_N]$.
\item[(ix)]  every path, $(b,\gamma,a) \in \Pi_{N+1} (C) $ has $d( \gamma^{ [T_N,N]}, \gamma^{ [T_N+ \eta_1,N+ \eta_2 ]}    ) \leq 2^{-r} $ for every $r $ with $0 \leq \delta_N(r) < \epsilon_N/2 $
and $0 \leq \eta_1, \eta_2 \leq \delta_N (r)$, where again $T_N = b^{[-N,N]^2}$; 
\end{enumerate}
 
\medskip

\begin{prop} \label{prok}
The sets $K(\vartheta)$ are compact.
%
\end{prop}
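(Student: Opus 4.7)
The plan is to prove sequential compactness by combining Skorokhod's cadlag compactness criterion at each scale $N$ with a diagonal extraction across scales. Given a sequence $(C_n)_{n \geq 1} \subset K(\vartheta)$, I first argue that for each fixed $N$ the projections $\Pi_N(C_n)$ range over a precompact subset of the Hausdorff space of finite collections of aged paths on $[-N,N]^2$. Conditions (i)--(iv) supply exactly the ingredients: the cardinality of $\Pi_N(C_n)$ is bounded by $M_N$; starting times lie in $[-N,N]$; spatial and age ranges lie in $[-M_N,M_N]$ and $[0,M_N]$ respectively; and the moduli of continuity of both $\gamma$ and the monotone $a$ are uniformly controlled by the summable sequence $\delta_N(r) \to 0$. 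Together with the standard Skorokhod compactness criterion this places each $\Pi_N(C_n)$ in a fixed compact subset of the Hausdorff space of closed families of aged paths restricted to $[-N,N]^2$.

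A standard diagonal extraction then yields a subsequence $(C_{n_k})$ along which, for every $N$, $\Pi_N(C_{n_k})$ converges in the Hausdorff metric on path collections to some $L_N$. The main task is to realize the family $(L_N)_{N \geq 1}$ as the projections of a single $C \in K(\vartheta)$. Conditions (v)--(ix) are tailored precisely to make the scale-to-scale restriction operator---filter from age $\geq 2^{-(N+1)}$ down to age $\geq 2^{-N}$ and restrict spatially from $[-(N+1),N+1]^2$ to $[-N,N]^2$---continuous at every element of the closure of $\Pi_{N+1}(K(\vartheta))$. Conditions (v) and (viii) keep the entry time into $[-N,N]^2$ away from the spatial boundary and from the age cutoff simultaneously; condition (vi) keeps age jumps a definite distance from the cutoff value $2^{-N}$; condition (vii) prevents coincident age jumps; and condition (ix), via Corollary \ref{corstab}-style stability, controls $d(\gamma^{[T_N,N]},\gamma^{[T_N+\eta_1,N+\eta_2]})$ under small perturbations of the entry time. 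Together these force $L_N$ to coincide with the natural restriction of $L_{N+1}$.

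With compatibility across scales in hand, I would reconstruct $C$ by gluing: for each path in $L_N$, identify its (eventually unique) extension in $L_{N+1}, L_{N+2}, \ldots$ via the compatibility just established, and take the closure in $G$ of the family of aged paths obtained by patching these extensions. Verifying $C \in K(\vartheta)$ reduces to checking that each of (i)--(ix) is preserved under the Hausdorff limit; since the parameters $\epsilon_N, \delta_N, M_N$ are fixed and the continuity analysis above ensures no limit configuration sits at the boundary of any constraint, each condition passes to the limit. Finally, $\rho(C_{n_k}, C) \to 0$ follows from the defining series $\rho = \sum_N 2^{-N}\min\{1,\rho_{[-N,N]^2}\circ\Pi_N\}$ via dominated summability together with the scalewise Hausdorff convergence.

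The main obstacle is the failure of $\Pi_N$ to be continuous on all of $G$: without conditions (v)--(ix), limits of projections need not be projections of any limit, and the diagonal argument would fragment. The heart of the proof is therefore the case analysis showing that these carefully chosen non-degeneracy conditions eliminate every relevant discontinuity of the restriction maps at points of $K(\vartheta)$, so that the scalewise compactness actually patches into a genuine element of $K(\vartheta)$.
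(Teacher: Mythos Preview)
Your proposal is correct and follows essentially the same route as the paper: per-scale compactness of $\Pi_N(C_n)$ via the Skorokhod criterion, a Cantor diagonal extraction to get limits $L_N$ (the paper's $D_N$), the consistency $\Pi_N(L_{N+1}) = L_N$ forced by conditions (v), (viii), (ix), and gluing to build $C$. One small correction: condition (iv) controls only the modulus of $\gamma$, not of $a$; the paper obtains compactness of the age processes from (ii) (boundedness) and (vii) (jump separation) together with the monotone increment property, so you should cite those rather than (iv) when handling the age component.
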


\begin{proof}
Given a sequence of collections $C_1, C_2, \dots $ in $K$, we prove that we can find a convergent subsequence which converges to an element of $K$.

If we fix $N$ and consider the paths $ \gamma $ that are in $\Pi_N (C_n) $ for some $n$, then by
conditions (iii) and (iv) this set is compact.  Similarly (ii) and (vii) ensure that the age processes will be compact.  Since the set of compact sets of trajectories on a bounded domain endowed with the Hausdorff metric is compact, that is  $\{\Pi_N(C) | C \in K\}$ is compact  for each $N$, we can take a subsequence, $(C_{n^N_j})$ of $(C_n)$  so that 
$(\Pi_N (C_{n^N_j}) )$ converges.    By Cantor diagonal method, we arrive at a subsequence
$C_{n_i}, i \geq 1$, such that for every $N$ the sequence $(\Pi_N (C_{n_i}))$ converges to some collection
$D_N$.  We must show that from this collection $(D_N)_{N \geq 1}$, we can find an aged path collection $C$ so that $C_{n_i}$ converges to $C$.  It will be clear that any such limit is in $K(\vartheta)$ so the principal task is to produce aged path collection$C$ so that for every $N$, $\Pi_N(C) = D_N$.  The essential step is to show that 
for each $N$, $\Pi_N(D_{N+1}) = D_N $.  Since $\Pi_N(C_{n_i}) \subset \Pi_N \Pi_{N+1} (C_{n_i}) $ It  is clear that $D_N \subset \Pi_N(D_{N+1})$.  
 Now we show that $\Pi_N(D_{N+1}) \subset D_N$ by contradiction. 
Suppose that $ \Pi_N(D_{N+1})$ contains a path $(b,\gamma, a) $ not in $D_N$.  
We have by hypothesis that there is a $\delta > 0 $ so that $(b,\gamma, a) $ is distance greater than $\delta $ from $D_N$.  We consider a sequence of paths 
$\gamma_{n_i}  \ \in \ C_{n_i} $ so that $\Pi_{N+1}\gamma_{n_i}$ converge to  $(b',\gamma' ,a' )$ such that $\Pi_N( (b',\gamma' ,a' ) = (b,\gamma,a)$. 
By condition (v) and (viii) $T_N( \gamma_{n_i} ) $ must be in time $[-N+\epsilon_N,N-\epsilon_N]  $ 
or the age $a(T_N) $ must be greater than $2^{-N}+ \epsilon_N$and  $ \gamma_{n_i}( T_N)$ must be in  spatial interval $[-N+\epsilon_N,N-\epsilon_N]  $.
From this and (ix), we see that $\Pi_N\gamma_{n_i }$ must converge to $(b, \gamma, a) $ and the desired contradiction is achieved.

To construct our limit set $C$ (which will clearly be in $K$), we need to find a collection of aged paths $C$ so that for each 
$N, \ \Pi_NC = D_N$.  Fix  $(\gamma_N, a_N) \ \in D_N$.  By the above paragraph we can find inductively $( \gamma_M,a_M) \in D_M \ \forall M > N$ so that $\Pi_N (( \gamma_M,a_M)) \ = \ ( \gamma_N,a_N)$.   So $\gamma_M $ and $ a_M $ are cadlag functions defined on intervals $[c_M, d_M]$ so that 
$$
\forall \ M > M' \ \geq N, \quad [c_{M'}, d_{M'}]   \ \subset \ [c_M, d_M] \mbox{ and } \gamma_M \vert _{[c_{M'}, d_{M'}]} = \gamma_{M'}, \ a_M \vert _{[c_{M'}, d_{M'}]} = a_{M'}.
$$
We also have that $d_M$ tends to infinity as $M $ tends to infinity but that (by condition (ii)), $\lim_{M -> \infty } c_M > -\infty $.
We define $\gamma $ on $(\lim_{M -> \infty } c_M, \infty ) $ by $\gamma(s) \ = \ \gamma_M(s) $ for any (and by the consistency all) $M$ with $s \ \in \ [c_M, d_M].$  Similarly for 
$a$.

We have that $(\gamma, a) $
has the desired property.  We take $C$ to be the totality of paths that can be obtained in this way (i.e. starting from some $N$ and taking a convergent sequence of aged paths.  It is clear $C$ is our limit.  
\end{proof}

\smallskip

For $N = 1, \, 2 , \dots $, we consider $\Gamma_N $ as the random collection of aged paths $\Pi_N (\mathcal{X})$.

Let us fix a space time square $S_{N} = [-N, N]^{2}.$

\medskip

\begin{prop} \label{goodbegin}
Given a path $(b,\gamma,a)$ that intersects $S_N$ while of age at least $\epsilon$, let 
$$
\beta = \beta(\gamma, \epsilon) = \inf \{t: (\gamma (t),t) \in S_N \mbox{ and } (\gamma (t),t) \mbox{ has age } \geq \epsilon\} \, .
$$ 
Given $N \ge 1$ and $1 > \sigma > 0$ then $\exists$ $\eta_1 > 0$ so that 
\begin{eqnarray*}
\lefteqn{P \big[ \exists \ \gamma \mbox{ that intersects } S_N \mbox{ while of age at least } \epsilon } \\
& & \qquad \mbox{ and has variation greater than } \sigma/100  \mbox{ on } [\beta, \beta + 3 \eta_1] \big] \le 10^{-6} \sigma^{2} \, .
\end{eqnarray*}
\end{prop}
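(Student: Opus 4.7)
The plan is to reduce to a finite collection of $\alpha$-stable trajectories and then apply a strong Markov argument with a stable tail bound to each one.

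First I will use Lemma \ref{nobigage} to reduce to an event $E_1$, with $\IP(E_1^c) \leq \sigma^2/(2 \cdot 10^6)$, on which every path of $\mathcal{X}$ that meets $S_N$ has age at most $M = M(N, \sigma)$ at time $N$. Let $\Pi$ denote the collection of distinct $\epsilon$-paths of $\mathcal{X}_\epsilon$ whose trajectory meets $S_N$; on $E_1$ every element of $\Pi$ was born (first attained age $\epsilon$) at some time in $[-N-M, N]$. Combining Proposition \ref{density} (the $\epsilon$-path density is $k/\epsilon^{1/\alpha}$) with Corollary \ref{denage} (giving a birth intensity $\sim (k/\alpha)\,\epsilon^{-1/\alpha-1}$ per unit space-time for new $\epsilon$-paths) and integrating against the standard $\alpha$-stable hitting probability for $[-N,N]$ to absorb paths born off $S_N$, I obtain
$$
\Lambda \ := \ \IE\big[|\Pi|\, \I_{E_1}\big] \ < \ \infty ,
$$
a bound depending on $N,\epsilon,M$ but crucially not on $\eta_1$.

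Next for each $\gamma \in \Pi$ the entry time $\beta(\gamma)$ is a stopping time of the natural filtration of the coalescing system, so by the strong Markov property the process $(\gamma(\beta+s)-\gamma(\beta))_{s \in [0,3\eta_1]}$ is distributed as a fresh centered $\alpha$-stable process on $[0,3\eta_1]$. (Any subsequent coalescence with an older trajectory is harmless since that trajectory is itself $\alpha$-stable and is grafted continuously at the coalescence time.) The classical L\'evy tail bound $\IP(\sup_{s \leq T}|X(s)|>y)\leq C_\alpha T/y^\alpha$ applied with $T=3\eta_1$ and $y=\sigma/200$ then yields
$$
\IP\big(\mbox{oscillation of } \gamma \mbox{ on } [\beta,\beta+3\eta_1] > \sigma/100 \, \big|\, \mathcal{F}_\beta\big) \ \leq \ C'_\alpha \,\eta_1/\sigma^\alpha .
$$
Summing over $\gamma \in \Pi$ and taking outer expectation gives $\IP(\exists\,\gamma\in\Pi\mbox{ bad},\, E_1)\leq \Lambda C'_\alpha \eta_1/\sigma^\alpha$, and choosing $\eta_1 \leq \sigma^{\alpha+2}/(2\cdot 10^6\, C'_\alpha\, \Lambda)$ controls this by $\sigma^2/(2\cdot 10^6)$; adding $\IP(E_1^c)$ yields the required $10^{-6}\sigma^2$.

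The main obstacle is precisely the counting step: a naive time-discretisation at scale $\eta_1$ would count each distinct element of $\Pi$ on the order of $1/\eta_1$ times, and since the per-path stable tail probability is itself linear in $\eta_1$, the two factors cancel and no choice of $\eta_1$ makes the union bound small. One therefore really must enumerate distinct $\epsilon$-path trajectories rather than grid-point incidences, which is possible precisely because the age-density identities of Section \ref{sec:agedproc} control the birth rate of $\mathcal{X}_\epsilon$ uniformly in time.
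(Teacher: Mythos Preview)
Your proof is correct and shares the paper's overall architecture—bound the number of relevant paths by a quantity $\Lambda$ independent of $\eta_1$, then apply strong Markov at $\beta$ together with the stable tail bound $\IP(\sup_{s\le 3\eta_1}|X(s)|>\sigma/100)\le C\eta_1$ and a union bound—but the counting step is genuinely different.

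The paper does \emph{not} invoke Lemma~\ref{nobigage} or the birth intensity from Corollary~\ref{denage}. Instead it discretises time at the fixed scale $\epsilon/3$ (not $\eta_1$): for each of the $\le 7N/\epsilon$ intervals $(i\epsilon/3,(i+1)\epsilon/3]$ it observes that any path with $\beta$ in that interval agrees after $\beta$ with some path of age $\ge\epsilon/2$ at time $i\epsilon/3$; the density of such paths is $k/(\epsilon/2)^{1/\alpha}$ by Proposition~\ref{density}, and a comparison with non-coalescing stable processes (plus a uniform chance $\ge 1/3$ of ending in $[-N,N]$) bounds the expected number touching $[-N,N]$ in that interval. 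Summing gives $\Lambda\le (7N/\epsilon)\cdot 6Nk/(\epsilon/2)^{1/\alpha}$ directly.

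Your route—restrict birth times via Lemma~\ref{nobigage}, read off the space-time birth intensity $(k/\alpha)\epsilon^{-1-1/\alpha}$ of $\epsilon$-fixed paths from Corollary~\ref{denage}, and integrate against the stable hitting probability of $[-N,N]$—is cleaner conceptually (you count distinct $\epsilon$-paths once, with no auxiliary comparison system) but uses more machinery. The paper's route is more elementary and self-contained, needing only the density statement of Proposition~\ref{density}. Your closing remark about the failure of discretising at scale $\eta_1$ is apt, and the paper's choice of the \emph{coarse} scale $\epsilon/3$ is precisely how it sidesteps that cancellation.
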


\begin{proof} We consider paths $\gamma $ for which the $\beta$ as defined above lies in $ \left( i \frac{ \epsilon }{3}, ( i+1)  \frac{ \epsilon}{3} \right]$ for fixed $i$.   ( There are $\leq \frac{7N}{ \epsilon }$ such $i$'s  for $\epsilon$ small). We fix such an $i$.)

We locally augment notation and write  $\beta(\epsilon )$ as $\beta(\epsilon , \gamma )  )$ as we will discuss multiple paths.
We are interested in the paths behaviour after the time $\beta(\epsilon )$.  As such it is only necessary to treat a ``good" representative.  While it may be true that the evolution of the age of path $\gamma$ immediately before $\beta(\epsilon , \gamma )$ was very rapid due to several coalescences, for every $\gamma$, the behaviour of the path $\gamma$ on interval $[\beta(\gamma ), \infty )$ 
will equal that of a path $\gamma '$ on interval $[\beta(\gamma ), \infty ) \ = \ [\beta(\gamma ' ), \infty )$ for some path whose age at time $i \epsilon /3 $ is at least $\epsilon /2 $.  
As such to establish the proposition it is enough to treat $\gamma$ having this property.  Henceforth we drop the dependence of $\beta$ on $\gamma$ from the notation.



So we are interested in the behaviour on interval $[\beta,\beta+ 3 \eta_1] $ of paths $\gamma$ having the property that at time $i \epsilon /3 $ the path has age at least $\epsilon /2$ and such that in time interval $i \epsilon /3, (i+1) \epsilon /3 ] $ the path $\gamma$ meets spatial interval $[-N,N]$.  We first note that by Proposition \ref{density}, the density of the translation invariant collection of walkers of age at least $\epsilon /2 $ at time $i \epsilon /3 $ (or indeed any time) is equal to $K/(\epsilon /2 ) ^{ 1 / \alpha } $.  We now consider (for comparison purposes) the system of stable processes beginning with these walkers    evolving independently on time interval $[ i \epsilon /3 , (i+1) \epsilon /3] $ without coalescence.  This system at time $(i+1) \epsilon / 3 $ will have the same density ($K/(\epsilon /2 ) ^{ 1 / \alpha } $) and will again be translation invariant.  Thus the expected number of points for our comparison system in $[-N,N]$ at time $(i+1) \epsilon /3$ is exactly $2N K/(\epsilon /2 ) ^{ 1 / \alpha }$.  
By the Markov property (applied when a  process first enters $[- N, N]$ each process that touches $[-N, N]$ has a probability $\geq C_{N, \epsilon }  > 0$ of being within interval $[-N,N]$ at  time $\frac{(i+1) \epsilon }{3}$ where by symmetry  e.g. $C_{N, \epsilon }  > 1/3$ for $\epsilon $ small enough and $N $ large enough.  Thus provided $\epsilon$ was fixed sufficiently small, we have that the expectation of the number of comparison processes that touch spatial interval$[-N,N]$ in  time interval $[ i \epsilon /3 , (i+1) \epsilon /3] $ is bounded above by 
$$
6N K/(\epsilon /2 ) ^{ 1 / \alpha } \, .
$$
This bound must then also apply to our original system of coalescing paths by simple stochastic domination.

 So the expectation of the total number (i.e. for every relevant $i$) is thus $\leq \frac{7N}{ \epsilon} \frac{2kN}{(\frac{\epsilon}{2})^{\frac{1}{\alpha}}}.$ By the Markov property (for each stable process) if we choose $\eta_1$ so that 
$$
P(\sup_{s \leq 3\eta_1} |X(s)-X(0)|\geq  \sigma / 100 ) \leq \sigma^2/ \left( \frac{7N}{ \epsilon} \frac{2N}{(\frac{\epsilon}{2})^{\frac{1}{\alpha}}}.10^7 \right) \, ,
$$ 
we have that outside probability  $ \frac{\sigma^{2}}{10^{7}}$, the variation on $[\beta, \beta + 3 \eta_1]]$ is less than $\sigma / 100$ for all these paths. \end{proof} 

\medskip

\begin{prop} \label{goodcompact}
Given a path $(b, \gamma,a)$ that intersects $S_N,$ while of age at least $\epsilon , \ \gamma $, let $\beta  = \beta ( \gamma, \epsilon) \ = \ \inf \{t: (\gamma (t),t) \in \ S_N \mbox{ and } (\gamma (t),t) \mbox{ has age } \geq \ \epsilon\}$.
\noindent Given $N, \sigma \quad  \exists \ ( \delta (r)) _{r \geq 1}$ a positive sequence tending to zero as $r$ tends to infinity so that \\
$P [ \exists \ \gamma $that intersects $[-N, N]^{2}$ while of age at least $ \epsilon $ and so that for some $r \geq 1, \omega ( 2^{-r},\gamma ,[ \beta , N]) > \delta(r) ] \   \leq  \frac{\sigma^{2}}{10^{6}}$.
\end{prop}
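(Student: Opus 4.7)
The plan is to mirror the structure of the proof of Proposition \ref{goodbegin}: first bound the expected number of relevant paths by a finite constant $C_0 = C_0(N,\epsilon,\alpha)$, and then apply a single-path modulus-of-continuity estimate together with Markov's inequality in a union-bound fashion.

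First I would extract from the proof of Proposition \ref{goodbegin} the estimate that the expected total number of paths of $\mathcal{X}$ which intersect $S_N$ while of age at least $\epsilon$ is bounded by some constant $C_0 < \infty$. (Recall that argument: decompose the values of $\beta$ into intervals $(i\epsilon/3,(i+1)\epsilon/3]$, use the comparison with the translation invariant system of non-coalescing stable processes begun from walkers of age $\geq \epsilon/2$ at time $i\epsilon/3$, and stochastic domination to transfer the count back to the coalescing system.) This gives
$$
\IE\bigl[\#\{ \gamma : \gamma \text{ intersects } S_N \text{ while of age } \geq \epsilon \}\bigr] \le C_0 \, .
$$

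Next I would use that each path $\gamma$ under consideration is, by construction, a stable process (possibly after concatenation through coalescences with older paths). Applied on the interval $[\beta,N]$ of length at most $2N$, its distribution can be dominated by that of a stable process $X$ on $[0,2N]$ for the purpose of bounding its $\delta$-modulus of continuity. Since $X$ is cadlag, $\omega(2^{-r},X,[0,2N]) \to 0$ a.s.\ as $r \to \infty$, so I can choose a deterministic sequence $\delta(r) \downarrow 0$ (depending only on $N$, $\sigma$, $\epsilon$, $\alpha$) such that
$$
P\bigl(\omega(2^{-r},X,[0,2N]) > \delta(r)\bigr) \le \frac{\sigma^{2}}{10^{7}\, C_0 \cdot 2^{r}} \quad \text{for every } r \ge 1.
$$

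Combining the two ingredients, the expected number of paths $\gamma$ for which there exists an $r$ with $\omega(2^{-r},\gamma,[\beta,N]) > \delta(r)$ is at most
$$
C_0 \sum_{r \ge 1} \frac{\sigma^{2}}{10^{7}\, C_0 \cdot 2^{r}} \le \frac{\sigma^{2}}{10^{7}} \, ,
$$
and Markov's inequality then gives the desired bound $\sigma^{2}/10^{6}$.

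The main obstacle is the comparison step in the middle paragraph: a path $\gamma$ in $\mathcal{X}$ does not remain attached to a single free stable trajectory after time $\beta$, since it may coalesce with older paths. The cleanest way around this is to invoke the fact that the (un-suppressed) path is itself distributed as a single stable process, so modulus-of-continuity bounds for a free stable process on $[0,2N]$ apply directly to the restriction of $\gamma$ to $[\beta,N]$; alternatively one can produce an explicit coupling of the post-$\beta$ trajectory with a free stable process and argue that coalescences only decrease the oscillation. Once this identification is in place, the Markov/union bound sketched above is routine.
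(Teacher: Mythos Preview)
Your proposal is correct and follows essentially the same approach as the paper: the paper's proof is the two-line sketch ``the claim follows from the observations $\omega(2^{-r},\gamma,[\beta,N])\to 0$ as $r\to\infty$ and the total number of paths is bounded in probability (from the previous proposition),'' and your argument simply makes these two ingredients quantitative via the expectation bound extracted from Proposition~\ref{goodbegin} together with a union bound over paths and over $r$. Your discussion of the ``main obstacle'' is also handled the same way the paper implicitly handles it in the proof of Proposition~\ref{goodbegin}, namely by passing to the ``good representative'' path of age $\ge \epsilon/2$ at time $i\epsilon/3$, which (being older) is never absorbed and hence evolves as a single stable process after $\beta$.
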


\medskip

\begin{proof} The claim follows from the observations $ \omega ( 2^{-r},\gamma ,[\beta , N]) \to 0$ as $r \to \infty$ and the total number of paths is bounded in probability (from the previous proposition). 
\end{proof} 
\smallskip

{\it Remark}:  The results above speak to properties  (i) and (iv)  in the definition of compact set $K(  ( \epsilon_N)_{N \geq 1},  ( M_N)_{N \geq 1}, ( \delta_N)_{N \geq 1})$ while the next proposition addresses (iii). Property (ii) follows from lemma \ref{nobigage} 

\medskip

\begin{prop} \label{propnear}
Given $\eta_2 , \sigma  > 0$, there is $N'$ so that the probability that a path $(b,\gamma,a)$ satisfies
\begin{enumerate}
\item[(i)] $\gamma$ hits $S_{N}$ while of age at least $2^{-N} $,
\item[(ii)] there exists $t \in [-N,N]$ so that $a_\gamma(t) \geq \eta_2$ and $ \gamma(t) \in [-N', N']^c$,
\end{enumerate}
is bounded by $\sigma$.
\end{prop}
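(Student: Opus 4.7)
\emph{Proof plan.} The strategy is to combine Lemma \ref{nobigage}, the counting/density argument used in the proof of Proposition \ref{goodbegin}, and the standard $\alpha$-stable tail bound for the supremum of the displacement over a finite time interval.

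First, apply Lemma \ref{nobigage} with parameters $\sigma/3$ and $N$ to obtain $M=M(\sigma/3,N)$ such that, outside an event of probability at most $\sigma/3$, every path of $\mathcal{X}$ meeting $S_N$ has age less than $M$ at time $N$. In particular every candidate path (i.e.\ one satisfying (i)) is born at some time in $(N-M,N]$.

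Next, apply the counting portion of the proof of Proposition \ref{goodbegin} with $\epsilon=2^{-N}$: partitioning $[-N,N]$ into subintervals of length $2^{-N}/3$ and using Proposition \ref{density} to bound the translation-invariant density of points of age $\geq 2^{-N}/2$ by $k/(2^{-N}/2)^{1/\alpha}$, the expected number of paths that meet the spatial interval $[-N,N]$ at some time in $[-N,N]$ while having age at least $2^{-N}$ is bounded by a finite constant $C_0=C_0(N,\alpha)$. By Markov's inequality, outside an additional event of probability at most $\sigma/3$, the number of such candidate paths is at most $C_1:=\lceil 3C_0/\sigma\rceil$.

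For each candidate $\gamma$ fix a time $\tau\in[-N,N]$ with $\gamma(\tau)\in[-N,N]$. Each walker in $\mathcal{X}$ has the marginal law of an $\alpha$-stable process (by the strong Markov property applied at the successive coalescence times, together with the fact that the post-coalescence evolution is again stable with the same parameters). Hence the restriction of $\gamma$ to $[\max(-N,b),N]$, where $b$ is its birth time, is a stable trajectory of duration at most $2N$. Since $\gamma(t)\in[-N',N']^c$ with $\gamma(\tau)\in[-N,N]$ forces $|\gamma(t)-\gamma(\tau)|>N'-N$, the standard stable tail bound yields
$$
P\Big(\sup_{s\in[\max(-N,b),N]}|\gamma(s)-\gamma(\tau)|>N'-N\Big)\leq\frac{C(\alpha)\cdot 2N}{(N'-N)^{\alpha}}.
$$
Choosing $N'$ so large that $C_1\cdot C(\alpha)\cdot 2N/(N'-N)^{\alpha}\leq\sigma/3$ and taking a union bound over the at most $C_1$ candidates bounds the remaining failure probability by $\sigma/3$. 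Summing the three contributions gives the claimed bound $\sigma$.

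The main obstacle is to justify cleanly that each walker's marginal trajectory is indeed $\alpha$-stable in spite of the coalescence mechanism; this follows from the construction of $\mathcal{X}$ and the strong Markov property applied at the coalescence times. Once this is granted, the counting estimate and the stable tail bound are routine.
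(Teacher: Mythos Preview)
Your argument has a genuine gap in the union-bound step. You first restrict to an event on which there are at most $C_1$ candidate paths (those hitting $S_N$ with age $\geq 2^{-N}$), and then apply the stable tail bound $P\bigl(\sup_s|\gamma(s)-\gamma(\tau)|>N'-N\bigr)\leq C(\alpha)\cdot 2N/(N'-N)^\alpha$ to ``each candidate'', multiplying by $C_1$. But the set of candidates is random, and the displayed inequality is the \emph{unconditional} displacement bound for a stable path; a union bound over a random collection implicitly conditions on membership, and that conditioning destroys the bound. Concretely: a path born at a dyadic spatial point far outside $[-N',N']$ (and at a dyadic time in $(-N,N-\eta_2)$, say) is very unlikely to be a candidate, but \emph{given} that it eventually hits $S_N$, it will with probability close to $1$ have been outside $[-N',N']$ while already of age $\geq\eta_2$. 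So the conditional exit probability you are effectively using can be arbitrarily close to $1$, and $C_1\cdot C(\alpha)\cdot 2N/(N'-N)^\alpha$ bounds nothing. (Your appeal to Lemma~\ref{nobigage} is also extraneous: it caps the birth time, not the birth position, and the duration bound $2N$ follows directly from $t\in[-N,N]$.)

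The paper avoids this by organising the count from the outside: it decomposes $[-N',N']^c$ into dyadic annuli $I_k$ of length $\sim 2^k N^\beta$, and for each $k$ bounds the expected number of paths of age $\geq\eta_2/2$ that lie in $I_k$ at the start of a short time sub-interval (this count grows like $2^k$ by the density estimate), multiplied by the probability that such a path also visits the spatial interval $[-N,N]$ within time $2N$ (a stable jump estimate decaying like $2^{-k(\alpha+1)}$). The product decays like $2^{-k\alpha}$, so the sum over $k\geq k_0$ is finite and tends to $0$ as $k_0\to\infty$, i.e.\ as $N'\to\infty$. Your approach can be repaired along the same lines: rather than first isolating candidates and then bounding a per-candidate exit probability, bound directly $E\bigl[\#\{\text{paths satisfying (i) and (ii)}\}\bigr]$ by slicing time, freezing the configuration at the start of each slice, and applying the Markov property \emph{forward} from that deterministic time so that no conditioning on later hitting of $S_N$ is involved.
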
 

\smallskip
 
\begin{proof}
\noindent Fix a positive integer $k$ and let $A_k$ be the event that for some $i$, there is a path $\gamma$ that intersects $I_k$ in time $\left[ \frac{i \eta_2}{3} , (i+1) \frac{\eta_2}{3} \right]$ so that
\begin{enumerate} 
\item[(i)]  $\left[ \frac{i \eta_2}{3} , (i+1) \frac{\eta_2}{3} \right] \cap [-N, N] \not= \emptyset$;
\item[(ii)] the path has age $\geq \frac{\eta_2}{2}$ at time $\frac{i \eta_2}{3}$;
\item[(iii)] $I_{k} = [2^{k} N^{\beta}, 2^{k+1} N^{\beta}] \cup  [-2^{k+1} N^{\beta}, -2^{k} N^{\beta}]$ for $\beta \ge \frac{2}{\alpha}$.
\end{enumerate}
Then we have since the number of such $i$ is bounded above by $\frac{7N}{\eta_2}$  and the expected number of such paths is dominated by $\frac{K 2^{k} N^{\beta}}{\eta_2^{\frac{1}{\alpha}}}$ then the expectation of this number is bounded above by 
$$
\frac{7N}{\eta_2}  \frac{K 2^{k} N^{\beta}}{\eta_2^{\frac{1}{\alpha}}} q_N \, .
$$
where $q_N$ is the probability that a stable process starting at some point in interval $[-N,N]$ visits $I_{k}$ before time $2N$ which is of order
$N/N^{\beta (\alpha +1)}2^{k(\alpha+1)}$. Thus for some $K' > 0$
$$
P(A_{k}) \le  \frac{7 K K'}{\eta_2^{1 + \frac{1}{\alpha}}} N^{2-\beta \alpha} 2^{-k\alpha} \, .
$$
Since $\beta \ge \frac{2}{\alpha}$, summing this over $k \geq k_0$ gives $P(\cup_{k \geq k_{0}} A_{k}) < \sigma$ for $k_{0}$ sufficiently large.  Since $\cup_{k \geq k_{0}} A_{k}$ contains the event in the statement we are done.
\end{proof}

\medskip

We use Proposition \ref{propnear} above to show:

\begin{lemma} \label{acont}
For every $N \in \IN$ and $\epsilon > 0$ there exists $\eta > 0$ so that 
$$
P\big( \exists \, (b,\gamma,a) \mbox{ so that } \Pi_N \gamma \ne \emptyset \mbox{ and so that } a(\cdot) \mbox{ jumps twice in an interval of length } 
\eta \big) 
$$
is less than $\epsilon$.
\end{lemma}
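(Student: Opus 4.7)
The plan is to confine attention to a bounded spatial window via Proposition~\ref{propnear} and then to bound the expected number of consecutive coalescences that occur within time $\eta$ of each other for paths in $\Pi_N\mathcal{X}$.

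First, I apply Proposition~\ref{propnear} with $\eta_2=2^{-N}$ and $\sigma=\epsilon/2$ to obtain $N'$ such that, outside an event $E_0$ of probability at most $\epsilon/2$, every path $(b,\gamma,a)$ with $\Pi_N\gamma\neq\emptyset$ satisfies $\gamma(t)\in[-N',N']$ for every $t\in[-N,N]$ with $a(t)\geq 2^{-N}$. A jump of $a(\cdot)$ at a time $\tau$ occurs exactly when $\gamma$ coalesces with a strictly older path at $\tau$, and because $a(\tau-)\geq 2^{-N}$ the older path also has age $\geq 2^{-N}$ at $\tau$. Consequently, on $E_0^c$ the event that $a(\cdot)$ jumps twice within some interval of length $\eta$ is contained in the event that there exist three paths $\gamma_1,\gamma_2,\gamma_3$ of strictly increasing ages (each $\geq 2^{-N}$) that coalesce in order at consecutive times $\tau_1<\tau_2$ with $\tau_2-\tau_1<\eta$, all taking place in $[-N,N]\times[-N',N']$.

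I bound the expected number of such triples by splitting it across the first coalescence. By Corollary~\ref{denage}, the density at time $t$ of paths of age $\geq 2^{-N}$ is the time-independent constant $k\cdot 2^{N/\alpha}$; its time-invariance, together with the fact that coalescences of such pairs are the only mechanism decreasing this density (births at age $2^{-N}$ being the only mechanism increasing it), forces the rate of such coalescences per unit space-time to be a finite constant $R_N$, of order $2^{N(1+1/\alpha)}$. Hence the expected number of first coalescences inside $[-N,N]\times[-N',N']$ is bounded by some $C_1=C_1(N,N')$. Conditionally on a first coalescence at $(\tau_1,x_1)$ of two paths of ages $a_1<a_2$, I estimate the conditional probability of a subsequent coalescence of the surviving trajectory within time $\eta$: the density of paths strictly older than $a_2$ at time $\tau_1$ is at most $k\cdot 2^{N/\alpha}$, and for a candidate older path at distance $d$ the probability the two stable trajectories meet within time $\eta$ is bounded by $P(\sup_{0\leq s\leq\eta}|X(s)|\geq d/2)\leq c\min(1,\eta/d^\alpha)$. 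Integrating this bound against the older-path density yields a quantity of order $2^{N/\alpha}\eta^{1/\alpha}$; the integral converges because $\alpha>1$.

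Multiplying gives expected number of bad triples at most $C\eta^{1/\alpha}$ for a constant $C=C(N,N',\alpha,k)$; choosing $\eta$ so that $C\eta^{1/\alpha}<\epsilon/2$ and applying Markov's inequality, together with the bound on $E_0$, completes the proof. The main technical obstacle is the second-coalescence estimate: it relies on the small-time stable-process tail $P(\sup_{s\leq\eta}|X(s)|\geq d)\lesssim\eta/d^\alpha$ and on the resulting integrability against the older-path density, which depends in an essential way on the assumption $\alpha>1$.
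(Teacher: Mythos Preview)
Your overall strategy is reasonable and genuinely different from the paper's, but two steps are not adequately justified.

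First, the density-balance argument for $C_1$ is heuristic: you assert that time-invariance of the age-$\geq 2^{-N}$ density forces a finite coalescence rate $R_N$, but neither the existence of a well-defined coalescence intensity nor its equality with the birth rate follows from the results cited. A direct fix is available: each such coalescence kills an age-$\geq 2^{-N}$ path, so the number of coalescences in $[-N,N]\times[-N',N']$ is bounded by the number of distinct age-$\geq 2^{-N}$ paths that touch that window, and this has finite expectation by the counting argument of Proposition~\ref{goodbegin}. Second, and more seriously, the phrase ``conditionally on a first coalescence at $(\tau_1,x_1)$'' hides a Palm-measure issue. The older-path density $k2^{N/\alpha}$ is an unconditional spatial average; once you condition on a coalescence at a specified space-time point, the law of the remaining configuration changes in a way you have not controlled, so integrating $c\min(1,\eta/d^\alpha)$ against $k2^{N/\alpha}\,dd$ does not bound the conditional expected number of meetings.

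The paper avoids both problems by a different decomposition: it discretizes time into $O(N/\eta)$ intervals of length $\eta$, applies Proposition~\ref{propnear} twice to confine all three paths to $[-N'',N'']$, and at a reference time $i\eta-2^{-N}/4$ bounds the number of candidate paths by a fixed $M$ (outside small probability). For each of the $\leq M^3$ ordered triples of candidates at \emph{fixed} starting positions, the probability of two successive meetings inside $[i\eta,(i+2)\eta]$ is at most $\eta^{2/\alpha}2^{2N/\alpha}$, uniformly in the positions (the $2^{-N}$-scale smoothing before the interval is what makes the bound uniform). Summing over intervals gives a bound $\sim\eta^{2/\alpha-1}$, which vanishes because $\alpha<2$. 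Your integral bound instead needs $\alpha>1$ for convergence; it is worth noting that the two arguments exploit opposite ends of the standing hypothesis $\alpha\in(1,2)$. Your approach can be repaired by anchoring the triples at a fixed reference time, as the paper does, rather than conditioning on the random first coalescence.
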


{\it Remark:}  Lemma \ref{acont} above addresses (vii) of the definition of the compact sets $K(\vartheta)$. 

\begin{proof}
We fix $N$ and $\epsilon$.  First pick $N'$ according to Proposition \ref{propnear} applied to $N$ with $\eta_2 = 2^{-N}/10 $ and $\sigma \ = \ \epsilon/ 100$.  Then pick $N''$ in this way with $N'$ substituted for $N$.  Pick $M$ so that the probability that the number of paths in $\Phi_{N+2} (C)$ to touch $(-N'',N'') \times (-N,N) $ is greater than $M$ is less than $\epsilon / 100)$.
Let the complements of these "expected events"  be denoted by $B_1, B_2, B_3$:\\

\indent
$B_1$ is the event that there exists a path which touches spatial interval $[-N,N]$ in time interval
$[-N-1,N+1]$ while having been outside spatial interval $[-N',N']$ while of age greater than $\eta_2 = 2^{-N}/10 $, \\
\indent
$B_2$ is the event that there exists a path which touches spatial interval $[-N',N']$ in time interval
$[-N-1,N+1]$ while having been outside spatial interval $[-N'',N'']$ while of age greater than $\eta_2 = 2^{-N}/10 $, \\
\indent
$B_3$ is the event that the number of paths to touch spatial interval $[-N'',N'']$ during temporal interval $[-N-1,N+1]$ while having age greater than $2^{-N-2}$ exeeds $M$. \\
We divide up the event in question into the union of events
$A(i,N)$ which is the event that a path of age $ \geq 2^{-N} $ meets two paths also of age $ \geq 2^{-N} $   in time interval
$[i \eta, (i+2) \eta]$ (which intersects $[-N,N]$) and all three paths were in $(-N'',N'')$ at time $i \eta - 2^{-N-1}$.  It is easy to see that 
the probability of $\cup _{i \eta \in (-N-1, N)} A(i,N) $ occurring but not one of the $B_i$ is less than 
$Const( N/ \eta ) 2^{2N/ \alpha }  M^3 \eta^{2/  \alpha } $: we simply note that for a fixed such interval, for the event to occur (and none of the $B_i$ ) at time $i \eta - 2^{-N} /4 $ all three of the processes must be among the at most $M$ processes of age at least $2^{-N} / 2 $ in spatial interval $[-N'',N'']$.  We have at most $M^3 $ choices for the three processes.  Uniformly over the positions of the three processes a time $i \eta - 2^{-N} /4 $, the probability that the first two meet and then the third in the given time interval is bounded by
$\eta^{2/ \alpha} 2^{2N/ \alpha } $.

 Given that our value $M$ has been fixed 
(and that $\alpha < 2$) this upper bound will be less than $\epsilon / 100$ if $\eta$ is chosen small enough.  Thus with these choices of $N', n'', M$ and $\eta $ the probability of the event occurring is less than 
$ \epsilon / 100 + \epsilon / 100 + \epsilon / 100 + \epsilon / 100  < \epsilon$ and the result follows.

\end{proof}

In a similar way we can show the following which is relevant to (vi) of the definition of $K(\vartheta)$.

\begin{prop} \label{reg1}
For each $\sigma, \epsilon > 0, \ N < \infty , \ \exists \eta > 0 $ so that 
$$
 P[  \exists \mbox{ a path that comes within } \eta \mbox { of  an older path while having age in } 
 $$
 $$
  ( \epsilon - \eta, \epsilon + \eta ) \mbox{ while in }  [-N,N]^2] \ < \ \sigma ^ 2/ 10^6 \, .
$$
\end{prop}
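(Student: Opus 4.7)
The plan is to mimic closely the proof of Lemma \ref{acont}. Fix $N, \sigma, \epsilon > 0$, and let $\eta < \epsilon/10$ be chosen small at the end.

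First, apply Proposition \ref{propnear} with $\eta_2 = \epsilon/4$ and tolerance $\sigma^2/10^7$ to obtain $N' < \infty$ so that, outside an event of probability $\sigma^2/10^7$, every path that touches $[-N,N]^2$ while of age at least $\epsilon/4$ remains in $[-N', N']$ throughout the times in $[-N-1, N+1]$ when its age exceeds $\epsilon/4$. Next, by Proposition \ref{density} together with Markov's inequality, choose $M < \infty$ so that, outside a further event of probability $\sigma^2/10^7$, the number of paths of age at least $\epsilon/4$ in $[-N',N']$ is at most $M$ at every dyadic time in $[-N-1, N+1]$ on a fine grid. Partition $[-N,N]$ into $O(N/\epsilon)$ subintervals of length $\Delta = \epsilon/10$ with endpoints $t^*_j$; if the event in the proposition occurs at some $t \in [t^*_j, t^*_{j+1}]$, then both the young path $\gamma$ (with age in $(\epsilon - \eta, \epsilon + \eta)$ at $t$) and the older path $\gamma'$ (with age $>\epsilon - \eta$) have age at least $\epsilon/2$ at $t^*_j$, and hence belong to the $\leq M$ paths identified above. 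It therefore suffices to bound, for each of the $\binom{M}{2}$ pairs at $t^*_j$, the probability that the two paths come within $\eta$ of each other during $[t^*_j, t^*_{j+1}]$.

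For a fixed such pair, the difference $Z_s = \gamma(s) - \gamma'(s)$ is, until coalescence, a symmetric $\alpha$-stable process of doubled scale. When the initial separation $d = |Z_{t^*_j}|$ is much larger than $\eta$, the classical tail estimate for the running maximum yields $P\bigl(\sup_{s \leq \Delta} |Z_s - Z_{t^*_j}| \geq d-\eta\bigr) \leq C\Delta/(d-\eta)^\alpha$. The main obstacle is the small-initial-separation case, where this tail bound degenerates. I would address it by introducing an auxiliary reference time $t^*_j - \epsilon/8$ at which both paths already exist with age at least $\epsilon/8$; by independence of the two stable processes until coalescence, the conditional density of $Z_{t^*_j}$ given $Z_{t^*_j - \epsilon/8}$ is bounded by $C/\epsilon^{1/\alpha}$. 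Combining this density bound with a stable-process hitting-time estimate for the interval $(-\eta,\eta)$ of duration $\Delta$ (an analogue of the ``uniformly over positions'' bound $\eta^{2/\alpha} 2^{2N/\alpha}$ used at the end of the proof of Lemma \ref{acont}) yields a per-pair probability of the form $C(\epsilon) \eta^\beta$ for some $\beta > 0$.

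A union bound over the $\binom{M}{2} \cdot O(N/\epsilon)$ (pair, slice) combinations, together with the two exceptional events above, yields a total probability at most $C(N, \sigma, \epsilon) \eta^\beta$, which is less than $\sigma^2/10^6$ for $\eta$ sufficiently small. The crux of the argument --- and the part that genuinely differs from Lemma \ref{acont} --- is the approach estimate for $Z$ in the small-separation regime, which requires exploiting the coalescing structure rather than a naive stable-process estimate, since for $\alpha > 1$ the difference process is recurrent and any bound uniform over all initial configurations must incorporate the translation-invariant marginal density at the reference time. The remaining steps are direct transcriptions of the scheme in Lemma \ref{acont}.
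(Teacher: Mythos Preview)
Your localization via Proposition \ref{propnear} and the bound on the number of relevant paths are correct and match the paper's sketch. The gap is in the core estimate. When you write ``It therefore suffices to bound, for each of the $\binom{M}{2}$ pairs at $t^*_j$, the probability that the two paths come within $\eta$ of each other during $[t^*_j, t^*_{j+1}]$'' you have discarded the age constraint $a_\gamma(t)\in(\epsilon-\eta,\epsilon+\eta)$. The remaining quantity does \emph{not} tend to $0$ with $\eta$: for a symmetric $\alpha$-stable process with $\alpha>1$ single points are regular, so the difference process $Z$ hits $0$ with positive probability on any fixed time interval, and hence
\[
P\Bigl(\inf_{0\le s\le\Delta}|Z_s|<\eta\Bigr)\ \longrightarrow\ P(T_0\le\Delta)\ >\ 0\qquad(\eta\downarrow 0).
\]
Your density bound at the auxiliary reference time $t^*_j-\epsilon/8$ only caps this probability at $O(\Delta^{1/\alpha}\epsilon^{-1/\alpha})$, independent of $\eta$; the asserted per-pair bound $C(\epsilon)\eta^\beta$ is therefore not justified. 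The analogy with the $\eta^{2/\alpha}$ estimate in Lemma \ref{acont} is misplaced: there the factor $\eta^{2/\alpha}$ arises from two \emph{meetings} within a time window of length $\eta$, not from a spatial approach of size $\eta$ within a window of fixed length $\Delta$.

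The smallness must come from the age constraint, which you never use after the reduction step. A given path has age in $(\epsilon-\eta,\epsilon+\eta)$ for a time interval of length at most $2\eta$; equivalently, by Corollary \ref{denage} the density of ``window-aged'' paths at any instant is $O(\eta\,\epsilon^{-1-1/\alpha})$. Carrying out the scheme of Lemma \ref{acont} literally means partitioning $[-N,N]$ into intervals of length $\eta$ (not $\epsilon/10$), stepping back to a reference time $\approx\epsilon/4$ earlier, and estimating per interval the probability that some pair among the $\le M$ relevant paths comes within $\eta$ \emph{while one of the two has age in the window}. The age condition supplies the missing $\eta$-factor --- either through the $2\eta$ time window in which the near-approach must occur, or through the $O(\eta)$ expected count of window-aged paths in $[-N',N']$ at the grid time --- and only then does the union bound over $O(N/\eta)$ intervals and $M^2$ pairs converge to $0$.
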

Again by Proposition \ref{propnear} we can restrict attention to paths within $N' $ of the origin. Again we bound the number of such paths that touch this spatial interval during time interval $(-N,N) $ while of age at least $2^{-N-1} $.  The argument is now as with Lemma \ref{acont}.

\begin{prop} \label{propagecont}
For each $\sigma, \epsilon > 0, \ N < \infty , \ \exists \epsilon_N > 0 $ so that for event
$A(N, \epsilon _N) \ \equiv  \  \{  \exists\gamma \mbox{ having age in interval }  (2^{-N} - \epsilon_N, 2^{-N}, + \epsilon_N)
\mbox{ at times in }( -N- \epsilon_N , -N+ \epsilon_N) \mbox{ and } (N- \epsilon_N, N+ \epsilon_N) \mbox{  while in interval spatial } [-N,N]\} $
satisfies
$$
	 P[  A(N, \epsilon _N) ] \ < \  \sigma ^ 2/ 10^6.
$$
\end{prop}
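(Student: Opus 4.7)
The plan is to follow the template of Lemma \ref{acont} and Proposition \ref{reg1}. By a union bound over the two windows $(-N - \epsilon_N, -N + \epsilon_N)$ and $(N - \epsilon_N, N + \epsilon_N)$ and symmetry, it suffices to bound the probability that some path has age in $(2^{-N} - \epsilon_N, 2^{-N} + \epsilon_N)$ at a time in $(N - \epsilon_N, N + \epsilon_N)$ while in $[-N,N]$ spatially. As a first reduction I apply Proposition \ref{propnear} with the spatial square replaced by $S_{N+1}$ and $\eta_2 = 2^{-N-2}$ (smaller than any age permitted by the event once $\epsilon_N < 2^{-N-2}$), producing $N' < \infty$ so that, outside an event of probability $\sigma^2/(3 \cdot 10^{6})$, every path of age at least $2^{-N-2}$ that touches $[-N-1,N+1]^2$ stays inside the spatial interval $[-N', N']$ throughout the window $[-N-1,N+1]$.

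Next I split the remaining event according to whether the age process $a$ of the offending path has a jump in $[N - \epsilon_N, N + \epsilon_N]$. In the \emph{no-jump case}, the slope-one continuity of $a$ between jumps and the hypothesis $a(s_0) \in (2^{-N} - \epsilon_N, 2^{-N} + \epsilon_N)$ for some $s_0$ in that window force $a(N) \in (2^{-N} - 2\epsilon_N, 2^{-N} + 2\epsilon_N)$. Corollary \ref{denage} then bounds the expected number of paths in $[-N', N']$ whose age at time $N$ lies in this $4\epsilon_N$-window by
\[
2 N' \!\left( \frac{k}{(2^{-N} - 2\epsilon_N)^{1/\alpha}} - \frac{k}{(2^{-N} + 2\epsilon_N)^{1/\alpha}} \right) = O_N(\epsilon_N),
\]
and Markov's inequality bounds the no-jump contribution by the same quantity, which is less than $\sigma^2/(3 \cdot 10^6)$ once $\epsilon_N$ is small enough.

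The \emph{jump case} contains the main obstacle: some path of age at least $2^{-N-2}$ coalesces with an older path at some $s_1 \in [N - \epsilon_N, N + \epsilon_N]$, both paths being in $[-N', N']$. To bound the expected number of such coalescences I would invoke time-stationarity of the web, already implicit in the construction of Section \ref{sec:agedproc} and the density statement of Proposition \ref{density}, to conclude that the rate per unit space-time of coalescences involving an age-$\geq 2^{-N-2}$ path equals the rate at which paths cross into that class from below, which by differentiating the density formula of Corollary \ref{denage} equals $k/(\alpha (2^{-N-2})^{1+1/\alpha})$; hence the expected number in our slab is at most $4 N' \epsilon_N \cdot k/(\alpha (2^{-N-2})^{1+1/\alpha}) = O_N(\epsilon_N)$. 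Choosing $\epsilon_N$ small to absorb the three $\sigma^2/(3 \cdot 10^6)$ contributions then yields $\IP(A(N, \epsilon_N)) < \sigma^2/10^6$. The technical work is in making the ``influx equals outflux'' rate argument rigorous; a robust self-contained alternative, exactly parallel to the proof of Lemma \ref{acont}, is to first bound (by Markov and Corollary \ref{denage}, outside an event of probability $\sigma^2/(10 \cdot 10^6)$) the number $M_N$ of age-$\geq 2^{-N-2}$ paths in $[-N', N']$ at time $N - \epsilon_N$, and then bound the probability of any pairwise coalescence of these $M_N$ paths during the window $[N - \epsilon_N, N + \epsilon_N]$ by $O(M_N^2 \epsilon_N^{1/\alpha})$ via the Markov property and a stable meeting-time estimate, which still tends to zero with $\epsilon_N$ since $1/\alpha > 0$.
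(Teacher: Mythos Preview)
Your no-jump case is correct and is essentially the paper's argument; the detour through a separate jump case is where you and the paper diverge. The paper avoids the split with a single observation: if $\gamma$ has age $a(s_0)\in(2^{-N}-\epsilon_N,2^{-N}+\epsilon_N)$ at some $s_0\in(N-\epsilon_N,N+\epsilon_N)$, then its \emph{oldest ancestor} $\gamma^*$ (the path born at time $d^*=s_0-a(s_0)$ passing through $(\gamma(s_0),s_0)$) has an age process that is \emph{exactly} linear on $[d^*,s_0]$, since by construction $\gamma^*$ cannot have coalesced with anything older before $s_0$. Evaluating at the fixed earlier time $N-2^{-N-1}\in(d^*,s_0)$, the ancestor has age in a $4\epsilon_N$-interval about $2^{-N-1}$, lies in $(-N',N')$ by Proposition~\ref{propnear} (applied with $\eta_2=2^{-N}/3$), and Corollary~\ref{denage} finishes in one line. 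In short, passing to the oldest ancestor makes your no-jump computation cover the whole event; the jump case is unnecessary. The paper's choice of reference time $N-2^{-N-1}$ rather than $N$ is exactly what guarantees it falls inside $[d^*,s_0]$ regardless of whether $s_0<N$ or $s_0>N$, which is why you were forced into the split while the paper was not.

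Your jump-case alternative (b) also has a small gap as stated. If the jump occurs at some $s_1\le s_0$, the path $\gamma$ may be arbitrarily young just before $s_1$ and hence need not be among your $M_N$ paths of age $\ge 2^{-N-2}$ at time $N-\epsilon_N$; the $O(M_N^2\,\epsilon_N^{1/\alpha})$ pairwise-coalescence bound then fails to capture that coalescence. (The older partner in that coalescence does witness the event at $s_0$, possibly with no jump of its own --- which is again the oldest-ancestor reduction.) Your alternative (a) you yourself flag as incomplete.
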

To see this for the time interval $(N- \epsilon_N, N+ \epsilon_N)$, we first choose $N'$ so that (using
Proposition \ref{propnear}), outside a set of probability $\sigma ^ 2/ 10^7.$ any path that meets
$(-N-1, N+1)$ in time interval $(-N-1, N+1)$ must be within $N'$ of the origin while having age at least $2^{-N}/3$.  Outside  this small probability event, the claimed event lies in the existence of a path
such that at time $N- 2^{-N-1}$ lies in $(-N',N')$ and has age in interval of length $4 \epsilon_N$
around $2^{-N-1}$.  Given Corollary \ref{denage}, we obtain the result.

We can equally address property (v) in our definition of compact $K$:
\begin{lemma} \label{lemrav}
For each $ N$ and each $\delta > 0 , \ \exists \ \epsilon_N > 0 $
so that  the probability that there exists a path $\gamma $ which first hits $[N- \epsilon_N, N + \epsilon_N] $ at a time before or equal to its first time of hitting $[-N,N] ^2$ 
while of age at least $2^{-N} $ is less than $\delta $.
\end{lemma}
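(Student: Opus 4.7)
The strategy parallels that of Propositions \ref{propagecont} and \ref{reg1}. Fix $N$ and $\delta > 0$. First apply Proposition \ref{propnear} with $\eta_2 = 2^{-N}/2$ and tolerance $\delta/2$ to obtain $N' > N$ such that, outside an event of probability $\delta/2$, every path of age $\ge 2^{-N}/2$ meeting $[-N,N]^2$ has its spatial coordinate in $[-N',N']$ throughout $[-N-1,N+1]$. This reduces the task to controlling offending paths whose trajectories (once of age $\ge 2^{-N}/2$) are confined to the bounded slab $[-N',N']$.

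Next, discretize time with step $\eta > 0$ to be chosen: set $t_i = -N + i\eta$. By Corollary \ref{denage}, at each $t_i$ the expected number of points of age $\ge 2^{-N}/2$ lying in $[-N',N']$ is bounded by a constant $M = 2N' k (2^{N+1})^{1/\alpha}$. Each such point is, by the strong Markov property, the starting location of a stable process on $[t_i,t_{i+1}]$. If a path $\gamma$ violates the claim and has $T_\gamma \in [t_j,t_{j+1}]$, one can select a grid time $t_i$ close to $T_\gamma$, lying between the last coalescence jump affecting $\gamma$ and $T_\gamma$, at which $\gamma$ is a bona fide stable trajectory of age $\ge 2^{-N}/4$ inside $[-N',N']$, and such that $\gamma$ either already lies in the strip $[N-\epsilon_N,N+\epsilon_N]$ at $t_i$ or first enters it during $[t_i,t_{i+1}]$. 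Split starting positions by distance to $N$: for $|x-N|$ larger than a fixed constant $c = c(N,\eta)$, the probability of reaching the strip within time $\eta$ is dominated by the stable-process tail $P(\sup_{s\le\eta}|X_s - x| \ge c-\epsilon_N)$, which is made small by choosing $\eta$ small; for $|x-N| \le c$, the first-entry distribution of the stable process into $[N-\epsilon_N,N+\epsilon_N]$ has absolutely continuous law with locally bounded density, giving an $O(\epsilon_N)$ bound. Multiplying $M$ by the $\sim 2N/\eta$ grid times and by these probabilities, choosing $\eta$ small first and $\epsilon_N$ small afterwards, drives the expected count of bad paths below $\delta/2$, producing a total probability below $\delta$.

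The principal technical obstacle is the $O(\epsilon_N)$ bound on the probability that a one-dimensional stable process of index $\alpha \in (1,2)$ first enters a narrow interval of length $2\epsilon_N$: this relies on the absolute continuity and local boundedness of the first-entry distribution (classical potential theory, see e.g.\ Blumenthal--Getoor). A secondary delicacy is that coalescence-induced age jumps prevent one from using the crude inequality $a(t_i) \ge a(T_\gamma)-(T_\gamma-t_i)$; this is circumvented by choosing $t_i$ inside the continuous-growth window between $\gamma$'s last coalescence and $T_\gamma$, where on $[t_i,T_\gamma]$ the age grows at rate $1$ and the lower bound on $a(t_i)$ needed above is automatic.
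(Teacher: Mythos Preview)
Your near-case step is where the argument breaks. For a symmetric $\alpha$-stable process with $\alpha\in(1,2)$, the first-passage (overshoot) distribution across a barrier is \emph{not} locally bounded: by the Blumenthal--Getoor--Ray formula, starting at distance $a>0$ above the level the landing density at overshoot $u>0$ is proportional to $a^{\alpha/2}u^{-\alpha/2}(a+u)^{-1}$, which blows up as $u\downarrow0$. Integrating over $u\in(0,2\epsilon_N)$ gives a probability of order $(\epsilon_N/a)^{1-\alpha/2}$, and this is $O(1)$ (not $O(\epsilon_N)$) when the starting distance $a$ is comparable to $\epsilon_N$. Concretely, a path sitting at $x=N+2\epsilon_N$ at your grid time $t_i$ will land inside $[N-\epsilon_N,N+\epsilon_N]$ upon first crossing with probability bounded away from zero. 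So the bound you claim for $|x-N|\le c$ is false uniformly in $x$, and the final expectation estimate $M\cdot(2N/\eta)\cdot O(\epsilon_N)$ does not follow.

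The paper repairs exactly this point with a two-scale device and quasi-left-continuity rather than a density bound. One first introduces an intermediate width $\epsilon'_N$ and shows (event $B_N(3)$) that, with small probability, no path achieves age $2^{-N}$ while sitting in the thicker strip $[N-\epsilon'_N,N+\epsilon'_N]$; this uses only the bounded density of age-$\ge 2^{-N}/3$ points and the Markov property, and fixes $\epsilon'_N$. For the remaining paths, which reach age $2^{-N}$ at distance $\ge\epsilon'_N$ from $N$, the paper invokes the scale-invariant quantity
\[
c(\epsilon)=P^1\bigl(\{X(s):0\le s\le\tau\}\cap[-\epsilon,\epsilon]\neq\emptyset\bigr),\qquad \tau=\inf\{s:X(s)\le0\},
\]
which tends to $0$ as $\epsilon\downarrow0$ because the symmetric stable process overshoots (no creeping, so $X(\tau)<0$ and $X(\tau-)>0$ a.s.). Scaling then gives that a path at distance $\ge\epsilon'_N$ from $N$ touches $[N-\epsilon_N,N+\epsilon_N]$ before crossing $N$ with probability at most $c(\epsilon_N/\epsilon'_N)$, made small by taking $\epsilon_N\ll\epsilon'_N$. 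This two-scale ordering ($\epsilon'_N$ first, then $\epsilon_N$) is the missing ingredient in your plan; your time-discretisation and localisation via Proposition~\ref{propnear} are fine, but the $O(\epsilon_N)$ first-entry bound must be replaced by this overshoot argument.
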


\begin{proof}
We denote the ``bad" event whose probability we wish to bound by $B_N$.
It follows from the self similarity properties of the stable process that for a stable process $\{X(s) \}_ { s \geq 0 }$ starting at $1$ with $\tau \ = \ \inf \{s: X(s) \leq 0\}$,
we have $X(\tau ) < 0 $.  By quasi left continuity we get that 
$$
c( \epsilon ) \equiv \ P^1 ( \{X(s) \ 0 \leq s \leq \tau \} \cap [- \epsilon, \epsilon] \ne \emptyset ) \ \Rightarrow \ 0
$$
as $ \epsilon $ tends to zero.  So by scaling we have for $\tau $ now equal to $ \ \inf \{s: X(s)X(0) \leq 0\}$
$$
\sup _ {|x| \geq a } P^x ( \{X(s) \  0 \leq s \leq \tau \} \cap [- \epsilon, \epsilon] \ne \emptyset ) \ = \ c( \epsilon / a ) .
$$

We divide up $B_N$ into four parts: 

$B_N(1)$: a path enters $[-N,N]^2 $ while of age at least $2^{-N} $ which had been outside spatial interval $[-N',N'] $ while of age in  $[2^{-N} /3, 2^{-N}]$.

$B_N(2)$: the number of paths inside  $[-N',N'] \times [-N,N] $ of age at least $2^{-N} /3$ is greater than $N''$.

$B_N(3)$: there exists a path that achieves age $2^{-N} $ while spatially in  $[N- \epsilon'_N, N + \epsilon'_N] $.

$B_N(4)$: $B_N$ occurs through a path that hits  $[-N,N]^2 $ with age at least $2^{-N} $ which achieved age $2^{-N}$ while outside   $[N- \epsilon'  _N, N + \epsilon'_N]$. 

The variables $N', \ N'' $ and $\epsilon'_N$ will be specified as the proof progresses.

By Proposition \ref{propnear} if $N' $ is fixed high enough, then $P(B_N(1)) < \delta / 4 $.  Similarly we have that for $N''$ sufficiently large $P(B_N(2) \backslash B_N(1)) < \delta / 4 $.
By applying the Markov property at age time $2^{-N} / 3 $ we easily see that $P(B_N(3) \backslash (B_N(1) \cup B_N(2) ) ) < C N'' \epsilon' _N 2^{N/ \alpha } <  \delta / 4 $
if $\epsilon ' _N $ is fixed small enough.   Finally
$$
P(B_N(4) \backslash (B_N(1) \cup B_N(2) \cup B_N(3)  ) ) < C N'' c(\frac{ \epsilon _N }{ \epsilon ' _N}) 
$$
 which is less than $\delta /4 $ if $\epsilon_N $ is chosen small enough.
\end{proof}

Propositions \ref{goodcompact}, \ref{reg1}, \ref{propagecont}, Lemma \ref{acont} and Corollary \ref{corstab} as well as the proof of Proposition \ref{goodbegin} yield.
\begin{prop} \label{reg2}
For each $\sigma,$ $ \exists \vartheta \ = \  ( \epsilon_N)_{N \geq 1},  ( M_N)_{N \geq 1}, ( \delta_N)_{N \geq 1}$ so that 
$$
 P[  C \not\in K( \vartheta) ) ] \ < \ \sigma ^ 2/ 10^6 \, .
$$
\end{prop}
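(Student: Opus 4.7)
The plan is to verify each of the nine defining conditions of $K(\vartheta)$ separately and then combine the estimates via a union bound. For each $N\ge 1$ I will choose $\epsilon_N$, $M_N$ and the sequence $\delta_N(\cdot)$ so that the probability that condition (j) fails at level $N$ is at most $\sigma^2 /(10^{6}\cdot 9 \cdot 2^{N})$ for each $j\in\{\mathrm{i},\dots,\mathrm{ix}\}$, and then sum over $N$ and $j$ to obtain the claimed bound $\sigma^2/10^6$ (after absorbing the combinatorial constant into a slight worsening that can be undone by shrinking the tolerances by a uniform factor, as is standard).

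I would carry out the verifications in the following order, each time quoting the relevant earlier result. Conditions (iii) and the ``no far excursion'' part needed for most other conditions come from Proposition \ref{propnear}, which lets me pick $N'=N'(N)$ so that with probability at least $1-\sigma^2/(10^{6}\cdot 9\cdot 2^N)$ every path in $\Pi_N(C)$ that hits $[-N,N]^2$ while of age at least $2^{-N-1}$ remains, for all ages above $2^{-N}/10$, inside spatial interval $[-N',N']$; this sets the spatial bound $M_N$ after combining with (ii). Condition (ii) (uniform upper bound on age) is exactly the content of Lemma \ref{nobigage} applied with $N$ and the prescribed probability level, and gives a further contribution to $M_N$. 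Condition (i) (finiteness of the number of paths after $\Pi_N$) follows by a first-moment estimate of the kind used in the proof of Proposition \ref{goodbegin}: the expected number of coalescing paths of age at least $2^{-N-1}$ that meet the bounded space-time window $[-N,N]\times[-M_N,M_N]$ is finite by Proposition \ref{density}, so Markov gives the required tail bound and fixes $M_N$. Condition (iv) is Proposition \ref{goodcompact} applied with $\epsilon=2^{-N-1}$ and the appropriate probability level, which determines the sequence $\delta_N(r)$.

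For the age-process conditions, I would invoke in turn: Proposition \ref{propagecont} to fix $\epsilon_N$ so that (viii) holds (no path has age in the critical window $(2^{-N}-\epsilon_N,2^{-N}+\epsilon_N)$ at the prescribed endpoint times); Lemma \ref{acont} to shrink $\epsilon_N$ further so that (vii) holds (age jumps are separated by more than $\epsilon_N$); and Proposition \ref{reg1} for (vi) (no coalescence with an older path while the age lies in the critical window). These three apply simultaneously, in the order stated, because each takes the previously chosen parameters as input and only requires that the relevant tolerance be made small enough. Condition (v), which says the path enters $[-N,N]^2$ strictly inside the spatial interval, is exactly Lemma \ref{lemrav} with $\delta = \sigma^2/(10^6\cdot 9\cdot 2^N)$, and gives one more constraint on $\epsilon_N$.

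Finally, for (ix) I use Corollary \ref{corstab}: after Proposition \ref{propnear} reduces us to controlling at most finitely many stable paths in the relevant space-time window (the number being bounded in probability by the same density argument as for (i)), applying Corollary \ref{corstab} to each one with tolerance $2^{-r}$ and shrinking $\delta_N(r)$ to match gives the required uniform control on the restriction maps $\gamma\mapsto \gamma^{[T_N+\eta_1,N+\eta_2]}$. The main obstacle I expect is that (ix) is quantified over all paths simultaneously and over a continuum of $\eta_1,\eta_2$, so one must first bound the number of relevant paths (using (i), (ii), (iii) as above) and then use the strong Markov property at the entry time $T_N$ to reduce the uniform-in-$\eta$ statement to finitely many applications of Corollary \ref{corstab}; the resulting $\delta_N(r)$ may have to be taken still smaller than what condition (iv) alone demands, but after taking the pointwise minimum of the two sequences one obtains a single admissible $\delta_N$. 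Adding the contributions from all nine conditions and all $N$ gives the total failure probability bounded by $\sigma^2/10^6$, as required.
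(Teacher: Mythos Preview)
Your proposal is correct and follows essentially the same approach as the paper: the paper's proof is the single sentence ``Propositions \ref{goodcompact}, \ref{reg1}, \ref{propagecont}, Lemma \ref{acont} and Corollary \ref{corstab} as well as the proof of Proposition \ref{goodbegin} yield,'' together with the earlier remarks that Propositions \ref{goodbegin}, \ref{goodcompact}, \ref{propnear}, Lemmas \ref{nobigage}, \ref{lemrav} handle conditions (i)--(v), and your write-up simply spells out this union-bound-over-conditions argument in detail. The assignment of results to conditions (i)--(ix) matches exactly what the paper intends.
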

{\it Remark:} This shows that our measure on aged paths is tight given Proposition \ref{prok}.
\vspace{0.3cm}

A $\theta $  process is simply a stable process beginning at a some time point in $\theta \IZ^{2}$.  The $(\theta i , \theta j)$-process is simply the stable process beginning at 
position $ \theta i $ at time $ \theta j $ (so that $ \theta$ processes are simply the collection over $(i,j) \in \ \IZ^{2}$ of $(\theta i , \theta j )$-processes.

\medskip

\begin{prop} \label{propapprox}
\noindent Given $N, N', \eta, \epsilon \hspace{0.2cm} \exists \hspace{0.2cm} \theta > 0$ so that  $P[\exists$ a path so that $( \gamma (s),s) \in [-N', N'] \times [-N',N]$ while having age $\geq \eta$ but is not\''coalesced```with a $(\theta{i}, \theta{j})$ process by age $2{\eta}] < \epsilon$ for $(\theta{i}, \theta{j}) \in [-N', N'] \times [-N', N]$

\noindent (Note: we can define the "age" for a $ \theta $ process $X^{i \theta, j \theta }$ (considered among other $\theta $-processes)  at time $s > j \theta $ as the
largest value $s - j' \theta $ over $j'$ so that for some $ i', \ X^{i \theta, j \theta }_s \ = \ X^{i' \theta, j' \theta }_s$. 
\end{prop}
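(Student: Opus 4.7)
The plan is to localize the problem spatially, bound the number of relevant paths via the density estimate of Proposition~\ref{density}, and then for each such path exhibit a nearby $\theta$-lattice starting point whose process quickly coalesces with it.

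First, apply Proposition~\ref{propnear} (with $N$ replaced by $\max(N,N')$, $\eta_2 = \eta/3$, and $\sigma$ sufficiently small) to obtain $N''>N'$ such that, outside probability $\epsilon/5$, every path satisfying the hypothesis lies inside $[-N'',N'']$ at every instant its age is at least $\eta/3$. By Proposition~\ref{density}, the density of paths of age $\geq \eta/3$ at any given time is bounded by $K(\eta/3)^{-1/\alpha}$, so a Markov-type bound yields, outside a further $\epsilon/5$ event, that the total number $M_0=M_0(N,N',\eta,\epsilon)$ of distinct such paths meeting $[-N'',N'']\times[-N-1,N+1]$ is finite. Invoking Lemma~\ref{acont} together with a standard tail bound on short-time jumps of a stable process, we further restrict (outside another $\epsilon/5$ event) to the good event on which no age process jumps twice within temporal distance $\eta/10$ and no relevant path has a spatial jump exceeding $1$ during any temporal interval of length $\theta$.

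For each relevant path $\gamma$, let $\tau = \tau(\gamma)$ be the first time $s \in [-N', N]$ with $\gamma(s)\in[-N',N']$ and $a_\gamma(s)\geq \eta$. If the age of $\gamma$ subsequently jumps to $\geq 2\eta$ via coalescence with a strictly older path $\gamma'$, then $\gamma$ and $\gamma'$ become identical and the problem for $\gamma$ reduces to the analogous one for $\gamma'$, which achieved age $\eta$ strictly earlier---a downward induction on the finite collection of relevant paths. Otherwise the age grows continuously and equals $2\eta$ at time $\tau+\eta$. Choose a lattice time $j^*\theta$ with $|j^*\theta - \tau|\leq \theta$ (so, by the no-jump clause, $\gamma(j^*\theta)$ is within $1$ of $\gamma(\tau)\in[-N',N']$), and let $\theta i^*$ be the nearest $\theta$-lattice spatial point to $\gamma(j^*\theta)$, giving $|\theta i^*-\gamma(j^*\theta)|\leq \theta/2$; enlarging $N'$ by a negligible amount if needed, $(\theta i^*, j^*\theta)\in[-N',N']\times[-N',N]$ for small $\theta$. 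By the space-time scaling of the coalescing stable web (space by $\theta$, time by $\theta^\alpha$) and the density bound of Proposition~\ref{density}, the probability that $\gamma$ and $X^{\theta i^*,j^*\theta}$ have not coalesced by time $\tau+\eta/2$ is bounded by a function of $\theta^\alpha/\eta$ that vanishes as $\theta\to 0$. A union bound over the at most $M_0$ relevant paths, with $\theta$ chosen small enough that $M_0$ times the per-path failure probability is $<\epsilon/5$, completes the estimate.

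The main obstacle is the possibility of \emph{wrong absorption}: the $\theta$-process could coalesce with a third, strictly older path $\gamma'\neq\gamma$ before meeting $\gamma$. This is controlled using the density estimate again---the nearest path of age $\geq \eta/3$ to $\gamma(j^*\theta)$ is typically at spatial distance of order $\eta^{1/\alpha}$, whereas $\theta/2\ll \eta^{1/\alpha}$ for small $\theta$. Coupling $(\gamma, X^{\theta i^*,j^*\theta})$ with two genuinely independent stable processes up to their first meeting with any other path, we see the probability that the $\theta$-process meets a competitor before meeting $\gamma$ is itself $O(\theta/\eta^{1/\alpha})$ and is absorbed into the same failure bound.
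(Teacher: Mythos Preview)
Your argument is essentially sound, but it is considerably more elaborate than the paper's, and one of your main concerns turns out to be a non-issue.

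The paper does not localize via Proposition~\ref{propnear}, does not invoke Lemma~\ref{acont} or any no-jump clause, and does not set up a downward induction on paths that merge with older ones. It simply runs a first-moment argument: partition the time axis into blocks of length $\eta/3$ (about $7N/\eta$ of them meeting $[-N,N]$), and in each block bound the expected number of paths of age at least $\eta/2$ that touch $[-N',N']$ by $KN'/(\eta/2)^{1/\alpha}$ via Proposition~\ref{density}. For each such path $\gamma$, once it reaches age $\eta$ at time $\tau$, the paper uses a \emph{geometric repetition} rather than a single $\theta$-lattice point: at every lattice time $j\theta\in[\tau,\tau+\eta]$ a fresh $\theta$-process starts within distance $\theta$ of $\gamma(j\theta)$, and by scaling it coalesces with $\gamma$ in the next $\theta$ time units with probability at least a universal constant $C>0$. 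Iterating gives failure probability $\le (1-C)^{\lfloor\eta/\theta\rfloor}$, which decays exponentially in $1/\theta$; multiplying by the expected number of paths and choosing $\theta$ small finishes the proof. Your single-lattice-point approach with a long-time hitting-probability tail works too, but gives only polynomial decay and forces you to first pin down a deterministic path-count $M_0$.

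More importantly, your ``wrong absorption'' paragraph is unnecessary. In the coalescing system, the difference $\gamma(s)-Y(s)$ between any two distinct paths evolves in law as a single $\alpha$-stable process right up to the moment it hits zero: whenever either path merges with a third path the driving increments switch, but by the strong Markov property the new increments are again stable and independent of the past, and no extra jump is introduced at the switching time. Hence the hitting-time bound for two \emph{independent} stable processes applies verbatim inside the full coalescing system, and competing paths cannot delay the coalescence of $\gamma$ with the chosen $\theta$-process. This is why the paper can write down the constant $C$ without ever mentioning interference from other paths.
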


\smallskip

\begin{proof}
We simply consider processes that at a time $\frac{i \eta}{3}$ in $\left(\frac{\eta}{3}, \frac{2 \eta}{3}\right)$ have life for some $i$ with $\frac{i \eta}{3} \in  [-N, N]$. Fix (provisionally) an $i$.

\vspace{0.1cm}
\noindent For each one enumerated in some arbitrary manner we can apply the Markov property at the time $\tau$ the age becomes $\eta$ in the following at times $\tau, \tau + \theta, \tau + 2 \theta,...$ there is a chance bounded away from $0$ that the stable process will coalesce with the $\theta-$process nearest it, so the probability that the coalescence is  not effected by time age $2\eta$ is
 $\leq (1-C)^{[\frac{\eta}{\theta}]}$, so the expected number of non coalesced processes is bounded (as usual) by $\frac{7N}{\eta} \times \frac{KN'}{( \eta''  )^{1 /  \alpha } } \times (1-C)^{(\frac{\eta}{\theta})}< \sigma$
  for $\theta$ small. $C$ here is (by scaling)  the infimum over $| x | \leq 1 $ of the probability that two independent stable processes beginning at time 0 at $x$ and $0$ hit before time $1$.
\end{proof}

This begets
\begin{cor} \label{cordelta}
Given $N$ and $\sigma > 0 $, there exists $N'$ and $ \theta > 0 $ so that if $\L $ is the collection of coalescing stable processes in $\theta \IZ^2 \cap [-N',N'] \times [-N',N] $, then 
outside probability $\sigma $  for every path $\gamma $ with $(\gamma (s), s ) \in [-N,N]^2$ for some $s \in \ [-N,N]$ with $a(s) \geq 2^{-N}$, we have $\gamma ' \ \in \ \L $ with distance 
$$
\rho ( \Psi_N \circ \Phi_{2^{-N}} \gamma , \Psi_N \circ \Phi_{2^{-N}} \gamma ')  < \sigma / 10
$$
where by abuse of notation $\Psi_N \circ \Phi_{2^{-N}} \gamma '$ is the path with the operator $\Phi_{2^{-N}} $ applied for the age of $\gamma ' $ among $\theta $ processes. 
\end{cor}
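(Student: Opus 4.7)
The plan is to combine Propositions \ref{propnear} and \ref{propapprox}: the former to confine every relevant path to a spatial window $[-N',N']$ while of sufficient age, and the latter with $\eta \ll 2^{-N}$ to force any such path to coalesce with some $\theta$-process before its age reaches $2\eta$. Concretely, I would first apply Proposition \ref{propnear} with $\eta_2 = 2^{-N-2}$ and tolerance $\sigma/3$ to fix $N'$, then pick $\eta = 2^{-N-k_0}$ for a large integer $k_0$ (to be tuned) and invoke Proposition \ref{propapprox} on the box $[-N',N']\times[-N',N]$ with this $\eta$ and tolerance $\sigma/3$ to produce a dyadic $\theta > 0$. On the intersection of these two high-probability events I would take $\gamma' \in \L$ to be the $\theta$-process that $\gamma$ first coalesces with, at some time $\tau$ strictly less than $T_\gamma := A^{\mathcal{X}}_\gamma(2^{-N})$; for every $s \geq \tau$ the spatial trajectories of $\gamma$ and $\gamma'$ coincide.

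To bound $\rho(\Pi_N \gamma, \Pi_N \gamma')$, note that after both age thresholds are attained the two restrictions trace a common spatial path, so the remaining discrepancy is just the starting-time offset $|T_\gamma - T_{\gamma'}|$, where $T_{\gamma'} := A^{\L}_{\gamma'}(2^{-N})$, and this can be absorbed by a piecewise linear time-reparameterization $g$ in the definition of $\rho$. Since $\theta$ is dyadic we have $\L \subset \mathcal{X}$, so the $\mathcal{X}$-age dominates the $\L$-age along the shared post-coalescence trajectory; combined with the unit-rate continuous growth of both ages between jumps, this forces $T_{\gamma'} - T_\gamma$ to be comparable to the $\mathcal{X}$-age jump of $\gamma$ at $\tau$, which in the benign case is bounded by $2\eta$. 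Picking $k_0$ sufficiently large, together with handling the $\Psi_N$ boundary discrepancy via Proposition \ref{reg1} and Lemma \ref{lemrav}, brings the combined distance below $\sigma/10$.

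The main obstacle I anticipate is the scenario in which $\gamma'$'s $\L$-ancestry extends strictly before $\tau$, i.e.\ in $\L$ the $\theta$-process $\gamma'$ had already coalesced with an older $\theta$-process, so that $T_{\gamma'}$ could be far smaller than $\tau$. In that case the pre-$\tau$ portion of $\Pi_N \gamma'$ traces a spatial trajectory distinct from $\gamma$'s on the intersection with $[-N,N]^2$, and the naive reparameterization above fails. I would address this either by ruling out the corresponding age-jump behaviour at value $2^{-N}$ via Proposition \ref{reg1} combined with a confinement estimate for old $\theta$-processes analogous to Proposition \ref{propnear} (now applied at the $\theta$-scale), or by refining the choice of $\gamma' \in \L$ within its $\L$-equivalence class so that its $\L$-starting-time best matches the $\mathcal{X}$-ancestor of $\gamma$ at $T_\gamma$, with the residual error vanishing as $\theta \to 0$. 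Once this pathology is controlled, the estimates combine to give $\rho(\Pi_N \gamma, \Pi_N \gamma') < \sigma/10$ uniformly over the admissible paths $\gamma$, as required.
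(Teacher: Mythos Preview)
Your skeleton matches the paper's --- confine via Proposition \ref{propnear}, then coalesce with a $\theta$-process via Proposition \ref{propapprox} --- but you are missing the ingredient that actually closes the estimate: Proposition \ref{goodbegin}. The paper's proof \emph{starts} by fixing $\eta_1$ from Proposition \ref{goodbegin} (with $\epsilon = 2^{-N}$), so that every relevant path has spatial variation $< \sigma/100$ on $[\beta, \beta + 3\eta_1]$; only then does it pick $\eta_2 < \sigma \eta_1/100$ and feed that into Propositions \ref{propnear} and \ref{propapprox}. Your claim that ``the remaining discrepancy is just the starting-time offset $|T_\gamma - T_{\gamma'}|$ \ldots absorbed by a piecewise linear time-reparameterization'' is where the gap is. In the metric $d$, any bijection $g:[T_\gamma,N]\to[T_{\gamma'},N]$ must deviate from the identity, so the spatial term contributes $\sup_t|\gamma(t)-\gamma(g(t))|$, and for a cadlag path this is \emph{not} controlled by $|t-g(t)|$ alone --- there can be a jump in the window. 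Lemma 7 of the paper makes this explicit: $d(f,f^\eta)\le \eta + \eta/h + 2\sup_{0\le s\le h}|f(0)-f(s)|$, and the last term is exactly what Proposition \ref{goodbegin} delivers. Without it your bound on $\rho(\Pi_N\gamma,\Pi_N\gamma')$ does not follow, no matter how small you take $\eta$.

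The two-scale structure $\eta_2 \ll \eta_1$ is what makes the paper's argument work: coalescence by age $2\eta_2$ forces the two start times to differ by $O(\eta_2)$, which sits well inside the $3\eta_1$-window on which variation is already small. Your single parameter $\eta = 2^{-N-k_0}$ cannot play both roles. Regarding your ``main obstacle'': with $\theta$ dyadic and $\L\subset\mathcal{X}$ you have $a^{\L}\le a^{\mathcal{X}}$, so if $\gamma'$ already had $\L$-age $\ge 2^{-N}$ at the coalescence time $\tau$ then $\gamma$'s $\mathcal{X}$-age jumps across $2^{-N}$ at $\tau$ and $\gamma$ is suppressed by $\Phi_{2^{-N}}$; the paper rules out the borderline cases directly by the no-age-jump clause (``arguing as in Lemma \ref{acont}'') rather than by the route you sketch through Proposition \ref{reg1}.
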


\begin{proof}
Given $N $ and $\sigma $ let us apply Proposition \ref{goodbegin} with $\epsilon = 2^{-N} $ to obtain $\eta_1 $ satisfying the desired condition.  We can also, arguing as in Lemma \ref{acont}
suppose that $\eta_1$ is sufficiently small that no path hitting $[-N,N]^2$ while of age greater than $2^{-N}$ has a jump in $[a(2^{-N})- \eta_1, a(2^{-N}) + \eta_1]$ outside this probability.
Now given this $\eta_1$ (which we can take to be small compared to $2~^{-N} $, let $ \eta_2  $ be less than $ \sigma \eta_1 / 100 $.   We apply Proposition \ref{propnear} with $\eta = \eta_2 $ and $ \sigma $ equal to our fixed $\sigma^2 / 10^6$.  This yields our desired $N'$ (We here also suppose that outside this probability no path hitting $[-N,N]^2 $ has age greater than $N'$.).  
Applying Proposition \ref{propapprox}, with $N, N', \eta $ and $\epsilon = \sigma^2 / 10^6 $ we have our $\theta$ and outside of probability $2 \sigma^2 / 10^6$, every path $\gamma $ as above has coalesced with a path in $\L $ before it has age $2 \eta $.  The result now follows from Proposition \ref{goodbegin}.
\end{proof}
\noindent Propositions \ref{goodbegin} , \ref{propnear} and \ref{propapprox} and Corollary \ref{cordelta} yield

\begin{prop} \label{propfinal}
\noindent  \hspace{0.2cm}$ \forall \sigma > 0$ there exists $N, N', \theta$ so that outside probability $\sigma$ the $\rho $ distance between the paths resulting from the stable web and the paths resulting from the $\theta \IZ^2 \cap [-N',N'] \times [-N',N+1] $ beginning after  age $2^{-N} $ (with age as mentioned after Proposition \ref{propapprox} )  when the processes touch  $  [-N, N]$ is less than $\sigma $.
\end{prop}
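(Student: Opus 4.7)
The plan is to combine Corollary~\ref{cordelta} with the series structure of $\rho = \sum_{M \geq 1} 2^{-M} \min\{1, \rho_{[-M,M]^2}(\Pi_M(\cdot), \Pi_M(\cdot))\}$ via truncation and a union bound, plus the observation that dyadic-spacing $\theta$-processes are themselves paths of the stable web. First I would choose $N$ so that $\sum_{M > N} 2^{-M} < \sigma/4$, controlling the tail automatically. For each $M = 1, \ldots, N$ I would then apply Corollary~\ref{cordelta} at level $M$ with an error budget $\sigma_M$ satisfying $\sum_{M=1}^N \sigma_M < \sigma/4$, producing pairs $(N'_M, \theta_M)$. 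Refining each $\theta_M$ to a dyadic $2^{-k_M} \leq \theta_M$ and setting $\theta = 2^{-\max_M k_M}$ and $N' = \max_M N'_M$ gives a single grid $\theta \IZ^2 \cap [-N',N'] \times [-N',N+1]$ that contains every level-$M$ approximating subgrid, so the conclusion of Corollary~\ref{cordelta} persists simultaneously at all $M \leq N$.

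The Hausdorff distance splits into two directions. Since $\theta$ is dyadic, the grid starting times lie in $D$ and, by the construction of $\mathcal{X}$ together with the subordination discussion after Proposition~\ref{density}, every $\theta$-grid path is literally a path of the stable web $\mathcal{X}$; hence the direction ``every grid path has a close stable-web path'' contributes zero. For the other direction, outside total probability $\sum_M \sigma_M < \sigma/4$, Corollary~\ref{cordelta} at each $M \leq N$ guarantees that every stable-web path $\gamma$ hitting $[-M,M]^2$ with age at least $2^{-M}$ admits some $\gamma' \in \mathcal{L}$ with $\rho_{[-M,M]^2}(\Pi_M \gamma, \Pi_M \gamma') < \sigma_M/10$. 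Summing the $2^{-M}$-weighted contributions over $M \leq N$ and adding the tail contribution from the first step yields the total $\rho$-bound below $\sigma$.

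The main obstacle I anticipate is unifying parameters across levels while preserving the approximation: the individual $\theta_M$'s from Corollary~\ref{cordelta} need not be dyadic or consistent across levels. This is resolved by invoking monotonicity of Corollary~\ref{cordelta}'s conclusion under grid refinement---a finer grid still contains the coarser approximating paths, and the added paths are themselves subordinate to the stable web---which allows us to replace each $\theta_M$ with a dyadic $2^{-k_M}$ below it and then take a common refinement. A minor compatibility check for the age filters is that $2^{-N} \leq 2^{-M}$ for $M \leq N$, so the age-$2^{-N}$ cutoff defining $\mathcal{L}$ retains every path later filtered by $\Pi_M$'s built-in $\Phi_{2^{-M}}$.
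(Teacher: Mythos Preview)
Your proposal has a genuine gap in the step where you ``sum the $2^{-M}$-weighted contributions over $M \leq N$.'' Applying Corollary~\ref{cordelta} separately at each level $M$ gives you, for each stable-web path $\gamma$ and each $M$, \emph{some} approximating $\gamma'_M \in \mathcal{L}$ with $\rho_{[-M,M]^2}(\Pi_M\gamma,\Pi_M\gamma'_M)$ small. But the Hausdorff bound requires $\inf_{\gamma'\in\mathcal{L}}\rho(\gamma,\gamma')$ to be small, and $\rho(\gamma,\gamma') = \sum_M 2^{-M}\min\{1,\rho_{[-M,M]^2}(\Pi_M\gamma,\Pi_M\gamma')\}$ involves a \emph{single} $\gamma'$ throughout the sum. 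You cannot push the infimum inside the sum in the useful direction: $\inf_{\gamma'}\sum_M a_M(\gamma') \geq \sum_M \inf_{\gamma'} a_M(\gamma')$, so your level-by-level bounds give only a lower bound. Your ``resolution'' paragraph produces a common grid $\mathcal{L}$ but does not produce a common approximant $\gamma'$; that is the real obstacle, and it is not addressed. The fix is to go back to Proposition~\ref{propapprox} rather than Corollary~\ref{cordelta}: once $\gamma$ has coalesced with a $\theta$-process $\gamma'$ by age $2\eta$ with $\eta$ small relative to $2^{-N}$, the spatial trajectories agree for all subsequent times, so the \emph{same} $\gamma'$ works at every level $M\leq N$, with the residual discrepancy at each level controlled by Proposition~\ref{goodbegin}. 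This is exactly why the paper cites Propositions~\ref{goodbegin}, \ref{propnear}, \ref{propapprox} directly alongside Corollary~\ref{cordelta}.

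There is a second, smaller gap in the reverse direction. Your claim that dyadic $\theta$-grid paths are ``literally'' paths of $\mathcal{X}$ and hence contribute zero overlooks that the objects being compared are \emph{aged} paths: the age carried by a $\theta$-process in the statement is its age among $\theta$-processes (as the parenthetical ``with age as mentioned after Proposition~\ref{propapprox}'' makes explicit), whereas the same spatial trajectory viewed inside $\mathcal{X}$ carries the larger stable-web age. Thus $\Pi_M$ acts differently on the two, and the distance is not zero. It is small, but establishing that requires essentially the same early-coalescence and small-variation argument as the forward direction.
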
 
 
\medskip

We denote the system of $\theta$-processes by $\mathcal{X}^\theta$ and the system of $\theta$-processes of age at least $\delta$ by $\mathcal{X}^\theta_\delta$. We claim that $\mathcal{X}^\theta$ plays the role of an skeleton for the stable web in an analogous way to the definition of an skeleton for the Brownian Web, see \cite{FINR}.

\section{\Large\bf Convergence in Distribution}

\noindent Consider a random walk $(W_n)_{n\ge 1}$ such that $W_n = \sum_{i=1}^n Z_{n}$ with $(Z_n)_{n=1}^\infty$ iid random variables whose distribution is in the domain of attraction of a stable symmetric $\alpha \in (0,2)$ r. v. $X = X^{0,0}_1$ ($X^{0,0}$ defined as in Section \ref{sec:agedproc}). Let $p(x) = P(Z_{1} = x)$, $x \in \mathbb{Z}$ be its transition probability function. Since the best convergence result is not our focus we assume that $p(\cdot)$ is symmetric and satisfies 
$$
x^{1 + \alpha} p(x) \rightarrow C \in (0, \infty) \, , \quad \mbox{as } |x| \rightarrow \infty \, , 
$$  
where the $C$ is chosen to be compatible with $X$.  Thus we have that
$$
\left( \frac{W_{\lfloor nt \rfloor}}{n^{\frac{1}{\alpha}}}  \right)_{t \geq 0} 
$$
converges in distribution under the Skorohod topology to $(X^{0,0}_{t})_{t\geq 0}$. We have the Gnedenko local CLT, see \cite{GK},
$$ 
\sup_{\vert x_{0} \vert \leq K \eta^{\frac{1}{\alpha}}}\left \vert n^{\frac{1}{\alpha}} P\left( W_n = x_{0} \right) - P_{ X} \left( \frac{x_{0}}{n^{\frac{1}{\alpha}}}  \right) \right \vert \rightarrow 0 \, , \quad \mbox{as } n \rightarrow \infty \, . 
$$
This is enough to show convergence of the Green functions:
$$
n^{(\frac{1}{\alpha}-1)} \sum^{\infty}_{j=1} P( W_j = [u n^{\frac{1}{\alpha}}]) e^{-\beta \frac{j}{n}} \to G_{\beta} ( u) \, , \ \ \forall \hspace{0.3cm} \beta > 0 \, ,
$$ 
uniformly on compact intervals for $u$.

\medskip

\noindent From this and standard optional stopping, we have

\medskip

\begin{lemma}
For every $\beta > 0$, if $W^{1}, \  W^{2}$ are two iid random walks with increments distributed as $p(\cdot)$ and starting at $W^{1}_{0}= 0$ and $W^{2}_{0} = [n^{\frac{1}{\alpha}}u]$ and $T_n = \frac{1}{n} \inf \{j: W^{1}_{j} = W^{2}_{j}\}$ then $\lim_{n\rightarrow \infty} E e^{- \beta T_n} = E e^{- \beta T}$ where $T = \inf \{t: X^{1}_{t} = X^{2}_{t} \}$ for $X^{1},$ $X^{2}$ iid stable processes distributed as $X^{0,0}$ starting at $X^{1}_{0} = 0$ and $X^{2}_{u} = u$. Therefore $T_n \Rightarrow T$.
\end{lemma}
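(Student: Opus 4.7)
The plan is to deduce convergence of Laplace transforms by writing them as ratios of discrete Green functions and then invoking the convergence displayed just above the lemma. Put $S_k := W^{1}_k - W^{2}_k$: this is a random walk on $\IZ$ with iid symmetric increments $Z^{1}_i - Z^{2}_i$, still in the domain of attraction of a symmetric $\alpha$-stable law, whose scaled paths $(S_{\lfloor nt \rfloor}/n^{1/\alpha})_{t \ge 0}$ converge to $\tilde X := X^{1} - X^{2}$. In these variables $n T_n = \tau_n := \inf\{j \ge 1 : S_j = 0\}$ for $S$ started at $-[u n^{1/\alpha}]$, and $T$ is the hitting time of $0$ by $\tilde X$ started at $-u$.

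By the strong Markov property of $S$ applied at $\tau_n$,
$$
\sum_{j \ge 0} e^{-\beta j/n} P^{x}(S_j = 0) \ = \ E^{x}\bigl[e^{-\beta \tau_n / n}\bigr] \sum_{j \ge 0} e^{-\beta j/n} P^{0}(S_j = 0) \, ,
$$
so, using $P^{-y}(S_j = 0) = P^{0}(S_j = y)$ by symmetry,
$$
E\bigl[e^{-\beta T_n}\bigr] \ = \ \frac{\sum_{j \ge 0} e^{-\beta j/n} P^{0}(S_j = [u n^{1/\alpha}])}{\sum_{j \ge 0} e^{-\beta j/n} P^{0}(S_j = 0)} \, .
$$
Multiplying both sums by $n^{1/\alpha - 1}$ and applying the Green-function convergence preceding the lemma (valid for $S$ by the same Gnedenko local CLT argument, since its increments have the same tail exponent $\alpha$), the numerator converges to the $\beta$-resolvent density $G_\beta^{\tilde X}(u)$ of $\tilde X$ and the denominator to $G_\beta^{\tilde X}(0)$, both finite and nonzero for $\alpha \in (1,2)$.

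The same last-exit decomposition for the continuous process $\tilde X$ gives $E[e^{-\beta T}] = G^{\tilde X}_\beta(u) / G^{\tilde X}_\beta(0)$, which is legitimate because $0$ is regular for $\{0\}$ under a symmetric $\alpha$-stable process when $\alpha > 1$. Taking ratios yields $E[e^{-\beta T_n}] \to E[e^{-\beta T}]$ for every $\beta > 0$, and the continuity theorem for Laplace transforms on $[0, \infty]$ delivers $T_n \Rightarrow T$. The step that requires the most care is transferring the Green-function convergence from $W$ to the difference walk $S$ and ensuring it holds uniformly enough in the spatial variable to accommodate the lattice discretization $[u n^{1/\alpha}]/n^{1/\alpha} \to u$; both points reduce to the uniform-on-compacts Gnedenko local CLT applied to the increment $Z^{1}_i - Z^{2}_i$.
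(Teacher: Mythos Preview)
Your proposal is correct and follows precisely the route the paper indicates: the paper provides no detailed proof, simply stating that the lemma follows ``from this [Green function convergence] and standard optional stopping,'' and your ratio-of-Green-functions argument via the strong Markov property for the difference walk is exactly the standard optional stopping computation being invoked. Your observation that one must transfer the local CLT and Green-function convergence from $W$ to the difference walk $S = W^1 - W^2$ is a legitimate point of care that the paper leaves implicit.
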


\medskip

\noindent From this we obtain

\begin{lemma} \label{lemdelta}
For $N$, $N^{1}$, $\epsilon$ and $\theta$ fixed positive and finite, the system of coalescing stable processes starting from points in $\theta \IZ^{2} \cap [-N^{1}, N^{1}] \times [N, N]$ is the limit of the system of coalescing random walks starting on $[-N^{1} n^{\frac{1}{\alpha}}, N^{1}n^{\frac{1}{\alpha}}] \times [-Nn, Nn]$ starting from points in $(\theta n^{\frac{1}{\alpha}}\IZ \times \theta n \IZ)$ and appropriately rescaled.
\end{lemma}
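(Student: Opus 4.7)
The plan is to establish the claim by induction on $K$, the finite number of starting space-time points $p_k=(\theta i_k,\theta j_k)$, $k=1,\ldots,K$, in $(\theta\IZ)^2\cap[-N^1,N^1]\times[-N,N]$, enumerated in some fixed priority order consistent with the coalescence rule. Let $\widetilde W^{(k)}$ denote the rescaled coalescing random walk initiated from $p_k$, where space is scaled by $n^{-1/\alpha}$ and time by $n^{-1}$. The goal is joint convergence of $(\widetilde W^{(1)},\ldots,\widetilde W^{(K)})$ to $(X^{(1)},\ldots,X^{(K)})$ in the $K$-fold product Skorokhod topology on time-space-compactified paths; this suffices for convergence in the metric $\rho$ since only finitely many paths are involved, all living in a bounded space-time window modulo the compactification.

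For the base case $K=1$ this is the one-path stable invariance principle: the hypothesis $|x|^{1+\alpha}p(x)\to C$ together with symmetry places $p(\cdot)$ in the domain of normal attraction of the target stable law, so $\widetilde W^{(1)}\Rightarrow X^{(1)}$ by the classical Donsker theorem in the stable setting, with the time-shift and space-shift to $p_1$ built in by the prescribed scaling. For the induction step, assume the claim for $K-1$ starting points and append the path from $p_K$. Before coalescence, run $\widetilde W^{(K)}$ independently of the others; joint convergence of the $K$ independent pre-coalescence walks to $K$ independent stable processes is immediate from the $K=1$ case applied coordinatewise. Let $T_n=\min_{i<K}T_n^{(K,i)}$ be the first time $\widetilde W^{(K)}$ meets any of $\widetilde W^{(1)},\ldots,\widetilde W^{(K-1)}$. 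The lemma immediately preceding the statement gives $T_n^{(K,i)}\Rightarrow T^{(K,i)}$ for each pair $(K,i)$. Passing to a common probability space via the Skorokhod representation theorem and applying the strong Markov property at $T_n$ on the discrete side and at $T=\min_{i<K}T^{(K,i)}$ on the continuum side, we glue the pre- and post-coalescence pieces and finish the induction.

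The main obstacle lies in upgrading the marginal pairwise coalescence-time convergence of the preceding lemma to joint convergence of $(T_n,\widetilde W^{(1)},\ldots,\widetilde W^{(K)})$ together with the identity of the first coalescence partner. Two ingredients are needed: first, that the vector of limiting pairwise meeting times $(T^{(K,1)},\ldots,T^{(K,K-1)})$ has a continuous joint distribution and attains its minimum uniquely a.s., which ensures that the map ``first-meeting time, first-meeting partner'' is continuous at the limit; second, that the walks' joint positions at time $T_n$ converge with $T_n$ itself, which follows from the absolute continuity of the marginals of the stable processes and the fact that the stable paths a.s.\ make no jump at the a.s.-continuous coalescence time $T$. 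Both points are standard for symmetric stable processes and can be obtained by arguments analogous to those used in the proofs of the coalescence-time lemma and of Proposition~\ref{propapprox}. Once joint convergence at the first coalescence is in hand, iterating over the at most $K-1$ coalescence events of the finite family and reading off the resulting convergence in the metric $\rho$ completes the proof.
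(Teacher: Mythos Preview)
Your approach is correct and is essentially an unpacking of what the paper leaves implicit: the paper offers no proof beyond the phrase ``From this we obtain'' after the coalescence-time lemma, and your induction on the number of starting points is the natural way to make that sentence precise. Your identification of the ``main obstacle''---upgrading the marginal convergence $T_n^{(K,i)}\Rightarrow T^{(K,i)}$ to joint convergence together with the paths and the identity of the first partner---is exactly the technical point hidden in the paper's elision, and your remarks on a.s.\ uniqueness of the minimum and absence of jumps at $T$ are the right ingredients to close it.

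One small clarification is worth making in your induction step: when you invoke the preceding lemma for $T_n^{(K,i)}$, the walk $\widetilde W^{(i)}$ is already a \emph{coalescing} walk among the first $K-1$, not an independent one. This is harmless because each coalescing walk is marginally a random walk and is independent of $\widetilde W^{(K)}$ up to their meeting, so the lemma applies pairwise; but the cleanest route is to build all $K$ coalescing paths as a deterministic functional of $K$ \emph{independent} walks together with their pairwise meeting times, and then argue continuity of that functional at the stable limit (using regularity of $0$ for the stable difference process to rule out near-misses). This avoids re-invoking the induction hypothesis inside the joint-convergence argument and makes the ``no ties, no jumps at $T$'' verification transparent.
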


\medskip


Recall the definitions of Section \ref{sec:agedproc} and Proposition \ref{density}. The system $\overline{X}$ of $\alpha$-stable processes starting from full occupancy at time $0$ is scale invariant and in particular the density scales as $k/t^{\frac{1}{\alpha}}$ for some constant $k$ not depending on $t$.  We now note that the density for coalescing random walks scales (when suitably renormalized) in the same way.

\smallskip

\begin{prop} \label{prop:densrw}
For coalescing random walks on $\IZ \times \IR_{+}$ beginning with full occupancy the density at time $n \approx k/n^{\frac{1}{\alpha}}$
where $k$ is the constant for the continuous time coalescing processes obtained in Proposition \ref{density}.
\end{prop}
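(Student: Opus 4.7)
Let $\rho_n$ denote the density at time $n$ of the full-occupancy discrete coalescing RW system. The proposition to be proved is $n^{1/\alpha}\rho_n \to k$. My plan is to rescale space by $n^{1/\alpha}$ and time by $n$ and match to the continuous system: under this rescaling the discrete initial lattice $\IZ$ becomes $n^{-1/\alpha}\IZ$, which densifies as $n\to\infty$, and the density of the rescaled system at rescaled time $1$ is exactly $n^{1/\alpha}\rho_n$. The expected limit is the density at time $1$ of the continuous full-occupancy system $\overline X$, which equals $k$ by Proposition~\ref{density}.

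For the lower bound, fix $c_0>0$ and consider the sparse discrete subsystem with walks starting only at $\lfloor c_0 n^{1/\alpha}\rfloor\cdot \IZ$. By Arratia's coalescence precedence these sparse walks form a subset of the full walks at every time, so its density $\rho_n^{(c_0)}$ satisfies $\rho_n^{(c_0)}\le \rho_n$. Rescaled by $n$ in time and $n^{1/\alpha}$ in space, Lemma~\ref{lemdelta} gives convergence in distribution, on each compact space--time rectangle, to the continuous coalescing system $\overline X^{c_0 \IZ}$; combined with the elementary bound on the initial number of walks in any compact spatial interval, this yields convergence of means and hence $n^{1/\alpha}\rho_n^{(c_0)}\to k_{c_0}$, where $k_{c_0}$ is the density at time~$1$ of $\overline X^{c_0\IZ}$. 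The convergence $\overline X^{V_n}_1\to\overline X_1$ from Section~\ref{sec:agedproc} gives $k_{c_0}\to k$ as $c_0\to 0$, so $\liminf_n n^{1/\alpha}\rho_n\ge k$.

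For the upper bound I would first establish an a priori bound $n^{1/\alpha}\rho_n\le K$ for all $n$ by the same Bramson--Griffeath-type argument used in the continuous density bound appearing at the start of Section~\ref{sec:agedproc}; it requires only a two-particle meeting estimate, which the local CLT supplies for these random walks. With this uniform integrability in hand a sandwich closes the argument: for $\varepsilon>0$, pick $c_0$ with $k-k_{c_0}<\varepsilon$, and control the excess $\rho_n-\rho_n^{(c_0)}$ by comparing to continuous refinements of $c_0\IZ$, whose densities approach $k$. This yields that the rescaled excess is asymptotically at most $k-k_{c_0}<\varepsilon$, hence $\limsup_n n^{1/\alpha}\rho_n\le k$.

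The main obstacle is the upper bound: whereas the lower bound follows cleanly from coalescence-precedence monotonicity, there is no matching upper monotonicity (one cannot augment ``full'' occupancy), so the surplus walks must be controlled via the continuous refinement, combining the BG-type a priori bound with repeated use of Lemma~\ref{lemdelta}.
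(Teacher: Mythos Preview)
Your lower bound is correct and is exactly the paper's argument: sparse subsystems started on $c_0 n^{1/\alpha}\IZ$ sit inside the full system by coalescence precedence, their rescaled densities converge to $k_{c_0}$ by Lemma~\ref{lemdelta}, and $k_{c_0}\uparrow k$.

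Your upper bound, however, has a real gap. The ``sandwich via continuous refinements of $c_0\IZ$'' does not close: every refinement $c_0 2^{-m}\IZ$ still gives only a \emph{lower} bound $\rho_n^{(c_0 2^{-m})}\le \rho_n$, so from $n^{1/\alpha}\rho_n^{(c_0 2^{-m})}\to k_{c_0 2^{-m}}<k$ you get nothing above $\rho_n$. To match the full discrete system you would need $c_0 2^{-m} n^{1/\alpha}\le 1$, i.e.\ $m=m(n)\to\infty$, and then you cannot pass to the $n$-limit for fixed $m$. The BG a priori bound alone does not rescue this; it gives $n^{1/\alpha}\rho_n\le K$ but not that the excess $n^{1/\alpha}(\rho_n-\rho_n^{(c_0)})$ is asymptotically $\le k-k_{c_0}$.

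The paper supplies the missing idea: an \emph{intermediate-time restart}. First use BG to bound the full-system density at time $rn$ by $m/(rn)^{1/\alpha}$ (so only finitely many walkers per unit length remain). Then launch a fresh $\theta n^{1/\alpha}\IZ$ system at time $rn$ and couple the surviving full-system walkers into it over $[rn,n]$. For $\theta$ small, almost every surviving walker coalesces with a $\theta$-walker by time $n$, so the full-system density at time $n$ is at most the $\theta$-system density at elapsed time $(1-r)n$ --- which by your own convergence argument is $\le (k+\epsilon)/((1-r)n)^{1/\alpha}$ --- plus an $\epsilon$ term for the uncoupled walkers. Sending $r\downarrow 0$ then $\epsilon\downarrow 0$ finishes. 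The BG bound is used not for ``uniform integrability'' but to reduce, at the intermediate time, to finitely many walkers per unit length so that the coupling estimate can be summed.
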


\begin{proof}
For the system of $\alpha$-stable coalescing processes the density at time $t$ is the (increasing) limit as $\theta \downarrow 0$ of the processes beginning at $\theta \IZ$. Thus for every $\epsilon > 0$, there exists $\theta > 0$ so that the density of coalescing $\theta$-processes at given times $t_{1}, t_{2}$ are greater than $(k - \epsilon)/t_{1}^{\frac{1}{\alpha}}$ and  $(k - \epsilon)/t_{2}^{\frac{1}{\alpha}}$.

We first take $t_{1} = 1$. Now for $\theta$ as above, we consider the coalescing random walks beginning at $\theta n^{\frac{1}{\alpha}} \IZ$. By the invariance principle and following discussion we get that for $n$ large the density of these coalescing random walks is at least  $(k - 2 \epsilon)/n^{\frac{1}{\alpha}}$ for $n$ large. Hence by monotonicity it is at least $(k - 2 \epsilon)/n^{\frac{1}{\alpha}}$ for the full process of coalescing random walks (i.e. starting from full occupancy).

On the other hand we can via Bramson and Griffeath arguments \cite{BG} show that there exists $m < \infty$ so that $\forall \hspace{0.1cm} n$ the density of coalescing random walks at time $T$ is bounded above by $m/n^{\frac{1}{\alpha}}$. In particular at $T = r n$, $r$ small, the density is bounded above by $m/(r^{\frac{1}{\alpha}} n^{\frac{1}{\alpha}})$. We now couple this to a coalescing system of random walks starting with occupancy at $\theta n^{\frac{1}{\alpha}} \IZ$ at time $T = r n$ and we take $t_{2} = (1-r)$ then the density of the full process of coalescing random walks at time $n = r n + (1-r) n$ is equal to the density of the $\theta n^{\frac{1}{\alpha}}\IZ$ random walks at time $(1- r) n$ plus the density of walks of the full process that have not coupled with a $\theta$ random walk by time $n.$ But (if $\theta$ is sufficiently small) this latter density will be smaller than $\epsilon$ while the former density (by invariance) will be less than $(k + \epsilon)/((1- l)^{\frac{1}{\alpha}} n^{\frac{1}{\alpha}})$.  Thus the density of full random walks at time $n$ will be bounded above by 
$$
\frac{k + \epsilon}{(1 - l)^{\frac{1}{\alpha}} n^{\frac{1}{\alpha}}} + \epsilon \, .
$$
We now let $r \downarrow 0$ and then $ \epsilon \downarrow 0$ to obtain our result.  
\end{proof}

\medskip

From now on we work with systems of continuous time coalescing random walks starting at points on $n^{-\frac{1}{\alpha}}\IZ \times n^{-1} \IZ$, making jumps from $in^{- 1/ \alpha } $ to $jn^{- 1/ \alpha }$ at rate  $np(j-i)$. The age of such a random walk is defined to be continuously increasing at rate 1 on intervals of no coalescence and when a coalescence occurs the age jumps to the age of the older path. We call it an aged random walk.  We denote the collection of aged random walks (suitably renormalized) as $\mathcal{W}_n$ for $n\ge 1$ (so $n$ is the scaling parameter). Given Proposition \ref{prop:densrw} we can easily prove the following analogue of Proposition 4:

\smallskip

\begin{prop}
Given an aged random walk path $(\gamma,a) \in \mathcal{W}_n$ that intersects $S_N$, while of age at least $\epsilon$, let 
$$
\beta = \beta( \gamma, \epsilon) \ = \ \inf \{t: (\gamma (t),t) \in \ S_N \mbox{ and } a(t) \geq \epsilon\}.
$$
Given $N$ and $\sigma$ there exists $\eta_1$ so that 
$$
P \big[ \exists \ (\gamma,a) \in \mathcal{W}_n \mbox{ that intersects } S_N \mbox{ while of age at least } \epsilon \mbox{ and has variation }
$$
$$
\mbox{ greater  than } \sigma \mbox{ on } [\beta, \beta + 3 \eta_1] \big] \leq  \frac{\sigma^{2}}{10^{6}} \, ,
$$
for every $n$.
\end{prop}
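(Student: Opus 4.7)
The plan is to transcribe the proof of Proposition \ref{goodbegin} into the aged random walk setting, with Proposition \ref{prop:densrw} playing the role of Proposition \ref{density}, and with tightness of the rescaled random walk replacing the $\alpha$-stable scaling used there to control single-path fluctuations. Throughout I would fix $\epsilon, \sigma > 0$ and $N$, and hunt for an $\eta_1$ that is independent of $n$.

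First I would partition $[-N-1,N+1]$ into intervals of length $\epsilon/3$; there are $O(N/\epsilon)$ of them. For each fixed index $i$, exactly as in the stable case, any offending path with $\beta \in ((i-1)\epsilon/3, i\epsilon/3]$ can be replaced by an older aged random walk that agrees with it on $[\beta,\infty)$ and whose age at time $(i-1)\epsilon/3$ is at least $\epsilon/2$; this is a purely combinatorial reduction on aged path families and carries over unchanged from the stable case. Second, I would invoke Proposition \ref{prop:densrw} (together with the Bramson-Griffeath type upper bound on density proved inside it) to bound the density of aged random walks of age $\geq \epsilon/2$ at any fixed time by $C/\epsilon^{1/\alpha}$ uniformly in $n$, and then stochastically dominate the coalescing evolution on $((i-1)\epsilon/3, i\epsilon/3]$ by independent random walks to conclude that the expected number of such walkers visiting $[-N,N]$ in that interval is at most $C'N/\epsilon^{1/\alpha}$. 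Summing over $i$ gives a total expected count of order $N^2/\epsilon^{1+1/\alpha}$, independent of $n$. Third, for each candidate walker I would apply the strong Markov property at the time $\beta$ and use tightness of the rescaled random walk in Skorokhod space: for any $\delta > 0$ one can pick $\eta_1$ so that, uniformly in $n$, $P(\sup_{0 \leq s \leq 3\eta_1}|W(s)-W(0)| > \sigma) < \delta$. Choosing $\delta$ small enough relative to the expected-count bound (e.g.\ $\delta = \sigma^{2} \epsilon^{1+1/\alpha}/(10^7 N^2)$) and combining with a Markov/union bound finishes the estimate.

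The main obstacle is uniformity in $n$ of both ingredients. Proposition \ref{prop:densrw} was stated in the limit $n \to \infty$, so to produce an $\eta_1$ that truly serves "every $n$", one has to extract from its proof the uniform-in-$n$ upper bound $m/t^{1/\alpha}$ on the density that the Bramson-Griffeath argument supplies (and handle the finitely many small $n$ by the trivial density bound $n^{1/\alpha}$ if needed). Uniformity of the small-time modulus of the rescaled random walk is the standard consequence of tightness of partial sums in the domain of attraction of the stable law, so it is routine but has to be pointed out because the statement requires it. Neither step is deep, but both need to be articulated cleanly so that the chosen $\eta_1$ does not degrade with $n$.
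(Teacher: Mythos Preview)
Your proposal is correct and follows exactly the route the paper intends: the paper does not give a separate proof but simply declares this proposition to be the analogue of Proposition~\ref{goodbegin} with Proposition~\ref{prop:densrw} replacing Proposition~\ref{density}, and your three steps (time discretization by $\epsilon/3$, reduction to walkers of age $\ge \epsilon/2$ and a density/expected-count bound via domination by independent walks, then the Markov property at $\beta$ combined with a small-time fluctuation estimate) are precisely that transcription. Your care about uniformity in $n$ is well placed: the uniform density upper bound $m/n^{1/\alpha}$ is exactly the Bramson--Griffeath input quoted inside the proof of Proposition~\ref{prop:densrw}, and the uniform small-time modulus for the rescaled walk is the standard tightness consequence you cite, so both ingredients are available as you describe.
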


\medskip

We similarly have analogues of Propositions \ref{goodbegin} ,\ref{goodcompact}, \ref{reg1} \ref{reg2}, \ref{propnear}, \ref{propapprox} and \ref{propfinal} as well as Lemmas \ref{acont} and \ref{lemrav}. This yields:

\smallskip

\begin{prop} \label{reg2}
For each $\sigma > 0$, there exists $\vartheta = (( \epsilon_N)_{N \geq 1},  ( M_N)_{N \geq 1}, (\delta_N)_{N \geq 1})$ so that for each $\delta > 0, $
there exists $n_0  \ = \ n_0( \delta ) \ < \infty$ so that for $n \geq n_0$
$$
 P[  \mathcal{W}_n \not\in  K^ \delta  ( \vartheta) ] < \sigma^2/ 10^6 \, .
$$
In particular $(\mathcal{W}_n)_{n\ge 1}$ is a tight family of random elements of $H$.
\end{prop}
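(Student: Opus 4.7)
The plan is to mirror, condition by condition, the compactness argument developed for the continuum system $\mathcal{X}$ in Section 3, with two key inputs substituted. First, the density bound $k/t^{1/\alpha}$ from Proposition~\ref{density}, used throughout to count how many aged paths of a given minimal age can intersect a given space-time window, is replaced by Proposition~\ref{prop:densrw}, which provides the analogous bound for $\mathcal{W}_n$ \emph{uniformly in} $n$. Second, the single-path stable estimates (short-time oscillation, near-miss hitting probabilities, and the path-restriction bound Corollary~\ref{corstab}) are replaced by their rescaled-random-walk counterparts, supplied by the invariance principle combined with the Gnedenko local CLT stated just before Lemma~11. The slack $\delta$ in $K^\delta(\vartheta)$ absorbs the finite-$n$ discretization: ages in $\mathcal{W}_n$ live on $n^{-1}\IZ$ and positions on $n^{-1/\alpha}\IZ$, so the strict inequalities in conditions (v)--(viii) of $K(\vartheta)$ cannot hold exactly but hold up to error $\delta$ once $n$ is large.

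Concretely, for each $N \ge 1$, one selects $\epsilon_N$, $M_N$ and the modulus sequence $\delta_N(\cdot)$ so that the random-walk analogues of Propositions~\ref{goodbegin}, \ref{goodcompact}, \ref{reg1}, \ref{propagecont}, \ref{propnear} and of Lemmas~\ref{acont}, \ref{lemrav} each produce a bad-event probability at most $\sigma^2 /(2^N \cdot 10^6)$; summing over $N$ yields the desired bound $\sigma^2/10^6$. Each such estimate decomposes into a count (via Proposition~\ref{prop:densrw}) of the expected number of aged random-walk paths entering the relevant window with a suitable minimal age, multiplied by a single-path probability obtained by the strong Markov property applied at the corresponding entry time. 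The single-path factors are bounded independently of $n$ using L\'evy's maximal inequality for symmetric sums together with the $n$-uniform tail bound $P(|W_{\lfloor nh\rfloor}| > c\, n^{1/\alpha}) \le C\, h/c^{\alpha}$, which follows from the regular-variation assumption on $p$ via a standard characteristic-function argument; the local CLT then upgrades this to uniform control of $n$-step Green functions and hitting probabilities, supplying a random-walk analogue of the function $c(\epsilon)$ used in Lemma~\ref{lemrav}.

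The principal obstacle is precisely this $n$-uniform character of the single-path estimates: ordinary Skorohod convergence gives only $n$-dependent bounds, whereas here one must tune the geometric parameters $(\epsilon, \eta, \sigma, h)$ and still have control for every $n$ simultaneously. Once the oscillation, hitting-probability, and restriction estimates are each established with constants that depend on the geometric parameters but not on $n$, every step of the arguments in Section~3 transfers verbatim with $\mathcal{X}$ replaced by $\mathcal{W}_n$, which delivers the stated bound $P(\mathcal{W}_n \notin K^\delta(\vartheta)) < \sigma^2/10^6$ for $n \ge n_0(\delta)$. Combined with the compactness of $K(\vartheta)$ from Proposition~\ref{prok} (of which $K^\delta(\vartheta)$ is a small neighborhood, and whose closure is again compact), the Prokhorov criterion in the Polish space $H$ yields tightness of $(\mathcal{W}_n)_{n\ge 1}$, completing the proof.
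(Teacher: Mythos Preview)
Your proposal is correct and follows essentially the same approach as the paper: the paper simply states that one has random-walk analogues of Propositions \ref{goodbegin}, \ref{goodcompact}, \ref{reg1}, \ref{propnear}, \ref{propagecont}, \ref{propapprox}, \ref{propfinal} and Lemmas \ref{acont}, \ref{lemrav} (with Proposition \ref{prop:densrw} replacing the density input), and that these yield the result, with the remark that $K^\delta$ is needed rather than $K$ because the random-walk estimates for $\Pi_N\gamma$ only kick in for $n$ large depending on $N$. Your write-up supplies the extra detail the paper omits---the $n$-uniformity of the single-path oscillation and hitting estimates via the symmetric maximal inequality and the Gnedenko local CLT, and the summation over $N$---but the structure is identical.
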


{\it Remark:} We need to consider $K^\delta$ rather than simply $K$ since the convergence of renormalized random walks to continuous time stable processes ensures the desired convergence for paths $\Pi_N \gamma $ for $n$ large enough (depending on $N$).

\medskip

The aim from this point is to prove weak convergence of $\mathcal{W}_n$ to $\mathcal{X}$. Our argument uses the approximation of $\mathcal{X}$ by $\theta$-processes and we need an analogous approximation for the system of aged random walks. So we consider the system of $\theta$-random walks associated to the scaling parameter $n$ as the collection of rescaled coalescing random walks starting at $\theta \mathbb{Z}^2$ that evolves as before, i.e. making jumps from $in^{- 1/ \alpha } $ to $jn^{- 1/ \alpha }$ at rate  $np(j-i)$.

Given a collection of $\theta$-random walks, we can, just as in the original process, speak of ages of paths: the age of a path $X^{\theta i , \theta  j}$
at time $s> j \theta$ is simply
$$
s - \inf \{ \theta l : \exists k: X^{\theta i , \theta  j}_s = X^{\theta k , \theta  l}_s \} \, .
$$
We then denote by $\mathcal{W}^\theta_n$ the system of aged coalescing $\theta$-random walks with scaling parameter $n$ and given $N$ and $N'$ we write $\mathcal{W}^{\theta,N'}_n$ for the system of coalescing $\theta$-random walks with scaling parameter $n$ beginning at points $\theta \IZ^2 \cap [-N-1, N] \times [-N',N']$. (Usually $N$ is given and so is dropped from the notation.)

\smallskip

As in Proposition \ref{reg2}, we can show that $(\mathcal{W}^\theta_n)_{n\ge 1}$ is a tight family of random elements of $H$. Moreover, we can prove as in Corollary \ref{cordelta} the following result:

\medskip

\begin{prop}
\label{pfdd}
Given $N, \ N' $ and $\theta$, the coalescing renormalized systems $\mathcal{W}^{\theta, N'}_n$ converge in distribution to $\mathcal{X}^{\theta, N'}$ as $n$ tends to infinity. Furthermore $ \Psi_N \circ \Phi_{2^{-N}} (\mathcal{W}^{\theta, N'} _n )$ converges in distribution to  $\Psi_N \circ \Phi_{2^{-N}} (\mathcal{X}^{\theta, N'} )$.
\end{prop}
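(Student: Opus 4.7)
The starting set $\theta\IZ^2 \cap [-N-1,N]\times[-N',N']$ is finite; let $K$ be its cardinality and enumerate it as $\{(x_j,s_j)\}_{j=1}^K$ with $s_1\le s_2\le\cdots\le s_K$. Both $\mathcal{W}^{\theta,N'}_n$ and $\mathcal{X}^{\theta,N'}$ are then aged-path collections indexed by these same $K$ starting points, so the first claim reduces to joint convergence in distribution of the $K$-tuple of aged cadlag paths $(W^{(j)}_n,a^{(j)}_n)_{j=1}^K \Rightarrow (X^{(j)},a^{(j)})_{j=1}^K$ in the product Skorohod topology; the Hausdorff convergence $\mathcal{W}^{\theta,N'}_n\Rightarrow \mathcal{X}^{\theta,N'}$ as random closed subsets of $G$ then follows by the continuous mapping theorem applied to the map sending a finite tuple of aged paths to its range.

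I would prove joint convergence of the underlying trajectories $(W^{(j)}_n)\Rightarrow (X^{(j)})$ by induction on $K$. The base $K=1$ is the invariance principle stated just before Lemma \ref{lemdelta}. For the inductive step, note that until its first coalescence time $\tau_n$ with one of the preceding $k$ paths, the $(k+1)$-st random walk is independent of the first $k$; hence the joint law up to $\tau_n$ converges to the analogous stable joint law up to the limiting coalescence time $\tau$. The convergence $\tau_n\Rightarrow\tau$ (and of the meeting location) follows from Lemma \ref{lemdelta} applied at successive first-meeting times, combined with a strong Markov argument. After $\tau$ the $(k+1)$-st path is by definition identically continued with the older path it met. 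Because the pairwise meeting times of the $K$ limit paths are a.s.\ all distinct (absolute continuity of the stable distribution), for $n$ large the combinatorial coalescence tree of the random walk system agrees with that of the stable system outside an event of vanishing probability. Since the age functions are explicit functionals of this tree, namely $a^{(j)}(s)=s-\min\{s_\ell : \ell\text{ has coalesced with }j\text{ by time }s\}$, joint Skorohod convergence of the aged $K$-tuple follows, establishing the first assertion.

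For the second assertion I would apply the continuous mapping theorem to $\Pi_N=\Psi_N\circ\Phi_{2^{-N}}$. This operator is discontinuous only at configurations in which some path touches the boundary of $[-N,N]^2$ tangentially, has age exactly $2^{-N}$ at the spatial or temporal boundary, or undergoes an age-jump straddling $2^{-N}$ near the boundary. Lemma \ref{lemrav}, Propositions \ref{goodcompact} and \ref{propagecont}, and Lemma \ref{acont} together imply that $\mathcal{X}^{\theta,N'}$ almost surely avoids this exceptional set, so $\Pi_N$ is a.s.\ continuous at $\mathcal{X}^{\theta,N'}$ and the continuous mapping theorem delivers $\Pi_N(\mathcal{W}^{\theta,N'}_n)\Rightarrow \Pi_N(\mathcal{X}^{\theta,N'})$.

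The principal obstacle is the inductive coalescence step: stable paths have jumps, so two trajectories can cross without ever meeting, and the map taking a pair of independent trajectories to their first meeting time is not globally continuous. Lemma \ref{lemdelta} is tailored precisely to this regime for two paths, but extending it jointly to $K$ paths requires careful use of the strong Markov property at successive first-meeting times together with the a.s.\ distinctness of meeting times in the limit, so that the combinatorial coalescence order is stable under the approximation. Once that is in place, the rest of the argument is a routine combination of the continuous mapping theorem with the regularity results already established for the stable web.
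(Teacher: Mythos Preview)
Your plan is correct and, in fact, considerably more detailed than what the paper offers: the paper gives no standalone proof of this proposition, merely prefacing it with ``we can prove as in Corollary~\ref{cordelta}'' and implicitly relying on Lemma~\ref{lemdelta}. Lemma~\ref{lemdelta} already asserts convergence of the finite coalescing path system (without ages), so your inductive argument on $K$ is effectively a reconstruction of that lemma's content; you could streamline by invoking it directly for the trajectories and then layering on your age-functional argument, which is exactly the missing ingredient needed to upgrade path convergence to aged-path convergence.

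Two small remarks. First, your citations for the a.s.\ continuity of $\Pi_N$ (Lemma~\ref{lemrav}, Propositions~\ref{goodcompact} and~\ref{propagecont}, Lemma~\ref{acont}) are stated for the full web $\mathcal{X}$, whereas here you need them for the finite $\theta$-system $\mathcal{X}^{\theta,N'}$, whose ages are defined internally among $\theta$-processes; since the system is finite the analogous statements are in fact easier and follow directly from absolute continuity of the stable law and distinctness of meeting times, so this is not a gap but you should note the distinction. Second, the obstacle you flag---that first-meeting time is not a continuous functional for jump processes---is the genuine crux, and your resolution (a.s.\ distinct meeting times in the limit plus the Laplace-transform convergence of meeting times from the lemma preceding Lemma~\ref{lemdelta}) is exactly what the paper's terse reference to Lemma~\ref{lemdelta} is hiding.
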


We are now ready to establish weak convergence of $\mathcal{W}_n$ to $\mathcal{X}$.

To establish weak convergence it is sufficient to show that for a bounded and continuous $F$ on our space
$$
E[F(\mathcal{W}_n)] \  \rightarrow \ E[F(\mathcal{X})]
$$
as $n$ tends to infinity.  

We fix $\epsilon > 0$ and below for a set $K \in H$ we denote $K^\eta = \{\psi : \rho(\psi,K) \le \eta \}$. 

Now we fix a bounded continuous function $F$ on the set of aged path collections.
Given $\epsilon > 0$, we fix a compact set $K$ of collections of paths as in Proposition \ref{prok} so that the probability that $\mathcal{X} \in K$ is at least $1 - \epsilon / 3 $.  By the compactness of $K$ we have that there exists $\eta' > 0 $ so that
$$
\forall \psi \in K, \quad \sup_{\psi ' : \rho( \psi, \psi' ) < 100 \eta'} \vert  F(\psi ) - F(\psi')  \vert \quad < \ \epsilon / 100
$$ 
which immediately implies that 
$$
\forall \psi \in K^{\eta/50}, \quad \sup_{\psi ' : \rho( \psi, \psi' ) < 50 \eta'} \vert  F(\psi ) - F(\psi')  \vert \quad < \ \epsilon / 50.
$$

By Proposition \ref{pfdd} for any $\theta > 0 $ (and $N, N'$), 
$$
 \big| E [F(\mathcal{W}^{\theta}_n)] -E[F(\mathcal{X}^ \theta )] \big| \ \rightarrow \ 0.
$$
We choose $\theta$, $N$ and $N'$ according to Proposition \ref{propfinal} so that the distance between $\mathcal{X}$ and $\mathcal{X}^\theta$ is less than $\eta'$ outside probability $\epsilon / 10 $.
We can take this $\theta$, $N$ and $N'$ so that we have equally for each $n$, the distance between $\mathcal{W}_n $ and $\mathcal{W}_n^\theta$ is less than $\eta'$ outside this probability. We now have
\begin{eqnarray*}
 \big| E [F(\mathcal{X})] - E[F(\mathcal{X}^\theta)] \big| & \leq & E \big[ \big| F(\mathcal{X})] -F(\mathcal{X}^\theta) \big| \big] \\
 & \leq &  E \big[ \big| F(\mathcal{X}) -F(\mathcal{X}^\theta) \big| I_{\mathcal{X} \notin K} \big] \\ 
& & \qquad  + \, E \big[ \big| F(\mathcal{X})-F(\mathcal{X}^ \theta ) \big| I _{\rho ( \mathcal{X} , \mathcal{X}^ \theta) < \eta} I _{\mathcal{X} \in K}  \big] \\
& & \qquad \qquad + \, E \big[ \big| F(\mathcal{X}) -F(\mathcal{X}^\theta ) \big| I _{\rho ( \mathcal{X} , \mathcal{X}^\theta )  \geq \eta }  \big] \, .
 \end{eqnarray*}
This latter sum is bounded by $2 \epsilon ||F||_ \infty \  +  \  \epsilon \ + 2 \epsilon  ||F||_ \infty \le 5 \epsilon \, (1 \vee ||F||_ \infty)$ by Proposition \ref{propfinal}.

We also have that for any $\eta > 0$, $P( \mathcal{W}_n \in K^ \eta ) > 1 - \epsilon / 2$ for $n$ large and so we can argue as above that for universal $C$
$$
 \big| E [F(\mathcal{W}_n)] -E[F(\mathcal{W}_n^\theta)] \big| \ \leq C \epsilon \, (1 \vee ||F||_ \infty)
$$
for $n$ large which gives that $ \big| E [F(\mathcal{W}_n)] -E[F(\mathcal{X})] \big| \ \leq C' \epsilon \, (1 \vee ||F||_ \infty) $ for $n$ large.

\noindent {\bf Acknowledgements:}
We  would like to thank NYU-Shanghai and EPFL, Lausanne for hospitality .






\begin{thebibliography}{99}

\bibitem[A1]{A1} Arratia, R.: Coalescing brownian motions on the line, Ph.D. Thesis, 1979.

\bibitem[A2]{A2} Arratia, R.: Coalescing brownian motions and the voter model on Z, Unpublished partial manuscript. Available from rarratia@math.usc.edu., 1981.

\bibitem[B]{B} Bertoin, J. \emph{L\'evy Processes}, Cambridge University Press, (1985)

\bibitem[BG]{BG} Bramson, M. and  Griffeath, D.: Clustering and Dispersion Rates for Some Interacting Particle Systems on Z, Ann. Probab.,Volume 8, Number 2 , 183-213 (1980).

\bibitem[D]{D} Durrett, R.: \emph{Probability: Theory and examples}, Cambridge University Press (2010).

\bibitem[EK]{EK} Ethier, S.N. and Kurtz, T.G.: \emph{Markov Processes: characterization and convergenge}, 2nd edition,
John Wiley and Sons (2005).

\bibitem[EMS]{EMS} Evans, S. N.; Morris, B.; and Sen, A.: Coalescing systems of non-Brownian particles, Probab. Theory Related Fields Vol. 156 No. 1-2, 307--342 (2013).

\bibitem[FINR]{FINR} Fontes, L. R. G.; Isopi M. M.; Newman,  C.  and  Ravishankar, K.: The Brownian web: Characterization and convergence, Ann. Probab.  Volume 32, Number 4, 2857-2883 (1980).

\bibitem[GK]{GK} Gnedenko, B. V. and Kolmogorov, A. N.: Limit Theorems for Sums of Independent Random Variables, Addison-Wesley, Cambridge, 1954.

\bibitem[NRS]{NRS} Newman, C. M., Ravishankar, K. and Rongfeng Sun. Convergence of coalescing nonsimple random walks to the brownian web, Electronic Journal of Probability, 10:21–60, 2005.

\end{thebibliography}
\end{document}